\declaretheorem[name=Theorem,style=plain,sibling=theorem]{ourtheorem}
\declaretheorem[name=Corollary,sibling=theorem]{ourcorollary}
\declaretheorem[name=Lemma,Refname={Lemma,Lemmas},sibling=theorem]{ourlemma}
\declaretheorem[name=Proposition,Refname={Proposition,Propositions},sibling=theorem]{ourproposition}
\declaretheorem[name=Remark,Refname={Remark},style=plain,sibling=theorem]{ourremark}
\tikzset{LMC style/.style={>=stealth',every edge/.append style={thick},every state/.style={minimum size=10,inner sep=0}}}
\title
{On the Size of Finite Rational Matrix Semigroups}
\author{Georgina Bumpus}{University of Oxford, UK}{}{}{Work supported by St John's College, Oxford.}
\author{Christoph Haase}{University College London, UK}{}{https://orcid.org/0000-0002-5452-936X}{}
\author{Stefan Kiefer}{University of Oxford, UK}{}{}{Work supported by a Royal Society University Research Fellowship.}
\author{Paul-Ioan Stoienescu}{University of Oxford, UK}{}{}{Work supported by St John's College, Oxford.}
\author{Jonathan Tanner}{University of Oxford, UK}{}{}{Work supported by St John's College, Oxford.}
\authorrunning{G.~Bumpus, C.~Haase, S.~Kiefer, P.~Stoienescu, and J.~Tanner} 
\keywords{Matrix semigroups, Burnside problem, weighted automata, vector addition systems}
\DeclareMathOperator{\rk}{rk}
\DeclareMathOperator{\im}{im}
\newcommand{\afmp}{afmp-$\Z$-VASS}
\newcommand{\A}{\mathcal{A}}
\newcommand{\B}{\mathcal{B}}
\newcommand{\C}{\mathbb{C}}
\newcommand{\F}{\mathbb{F}}
\newcommand{\GL}{\mathit{GL}}
\newcommand{\Gr}{\mathrm{Gr}}
\newcommand{\M}{\mathcal{M}}
\newcommand{\mon}[1]{\overline{#1}}
\newcommand{\N}{\mathbb{N}}
\newcommand{\ourbound}{2^{n (2 n + 3)} g(n)^{n+1}}
\newcommand{\Q}{\mathbb{Q}}
\newcommand{\R}{\mathbb{R}}
\newcommand{\reach}{\mathcal{R}}
\newcommand{\spa}[1]{\left< #1 \right>}
\newcommand{\triv}{\{\vec{0}\}}
\newcommand{\vzero}{\vec{0}}
\newcommand{\V}{\mathcal V}
\newcommand{\Z}{\mathbb{Z}}
\begin{document}

\maketitle              

\begin{abstract}
  Let $n$ be a positive integer and $\mathcal M$ a set of rational
  $n \times n$-matrices such that $\mathcal M$ generates a
  finite multiplicative semigroup. We show that any matrix in the
  semigroup is a product of matrices in $\mathcal M$ whose length is at most
  $2^{n (2 n + 3)} g(n)^{n+1} \in 2^{O(n^2 \log n)}$, where $g(n)$ is the maximum order
  of finite groups over rational $n \times n$-matrices.
  This result implies algorithms with an elementary
  running time for deciding finiteness of weighted automata
  over the rationals and for deciding reachability in affine integer
  vector addition systems with states with the finite monoid property.
\end{abstract}

\section{Introduction} \label{sec-intro}

\paragraph*{The Burnside Problem}

An element~$g$ of a semigroup~$G$ is called \emph{torsion} if $g^i = g^j$ holds for some naturals $i < j$, and $G$ \emph{torsion} if all its elements are torsion. Burnside~\cite{Burnside02} asked in~1902 a question which became known as the \emph{Burnside problem} for groups: is every finitely generated torsion group finite?
Schur~\cite{Schur11} showed in~1911 that this holds true for groups of invertible complex matrices, i.e., any finitely generated torsion subgroup of~$\GL(n,\C)$ is finite.
This was generalised by Kaplansky~\cite[p.~105]{Kaplansky72} to matrices over arbitrary fields.
The Burnside problem for groups has a negative answer in general: in~1964 Golod and Shafarevich exhibited a finitely generated infinite torsion group~\cite{Golod64,GolSha64}.

\paragraph*{The Maximal Order of Finite Matrix Groups}

Schur's result~\cite{Schur11} assures that finitely generated torsion matrix groups are finite, but does not bound the group order.
Indeed, it is easy to see that any finite cyclic group is isomorphic to a group generated by a matrix in $\GL(2,\R)$.
The same is not true for $\GL(n,\Q)$:
An elementary proof 
(which we reproduce in the appendix following \cite{KuzmanovichPavlichenkov02})
shows that any finite subgroup of~$\GL(n,\Q)$ is conjugate to a finite subgroup of~$\GL(n,\Z)$.
Another elementary proof shows that the order of any finite subgroup of~$\GL(n,\Z)$ divides~$(2 n)!$; see, e.g., \cite[Chapter~IX]{Newman72}.
Thus, denoting the order of the largest finite subgroup of~$\GL(n,\Q)$ by~$g(n)$, we have $g(n) \le (2 n)!$.
It is shown in a paper by Friedland~\cite{Friedland97} that $g(n) = 2^n n!$ holds for all sufficiently large~$n$.
This bound is attained by the group of signed permutation matrices.
Friedland's proof rests on an article by Weisfeiler~\cite{Weisfeiler84} which in turn is based on the classification of finite simple groups.
Feit showed in an unpublished manuscript~\cite{FeitUnp} that $g(n) = 2^n n!$ holds if and only if $n \in \N \setminus \{2,4,6,7,8,9,10\}$.\footnote{A list of the maximal-order finite subgroups of~$\GL(n,\Q)$ for $n \in \{2,4,6,7,8,9,10\}$ can be found in~\cite[Table~1]{Berry04}.}
Feit's proof relies on an unpublished manuscript~\cite{WeisfeilerUnp}, also based on the classification of finite simple groups, which Weisfeiler left behind before his tragic disappearance.

\paragraph*{Deciding Finiteness of Matrix Groups}

Bounds on group orders give a straightforward, albeit inefficient, way of deciding whether a given set of matrices generates a finite group: starting from the set of generators, enlarge it with products of matrices in the set, until either it is closed under product or the bound on the order has been exceeded.
One can do substantially better: it is shown in~\cite{BabaiBealsRockmore93} that, using computations on quadratic forms, one can decide in polynomial time if a given finite set of rational matrices generates a finite group.

\paragraph*{Deciding Finiteness of Matrix Semigroups}

The Burnside problem has a natural analogue for semigroups.
In~1975, McNaughton and Zalcstein~\cite{McNaughtonZalcstein75} positively solved the Burnside problem for matrix semigroups, i.e., they showed, for any field~$\F$, that any finitely generated torsion subsemigroup of~$\F^{n \times n}$ is finite, using the result for groups by Schur and Kaplansky as a building block.
From a computational point of view, McNaughton and Zalcstein's result suggests an approach for deciding finiteness of the semigroup generated by a given set of rational matrices: finiteness is recursively enumerable, by closing the set of generators under product, as described above for groups.
On the other hand, infiniteness is recursively enumerable by enumerating elements in the generated semigroup and checking each element whether it is torsion.
By the contrapositive of McNaughton and Zalcstein's result, if the generated matrix semigroup is infinite, it has a non-torsion element, witnessing infiniteness.
However, deciding whether a given matrix has finite order is nontrivial.
Only in~1980 did Kannan and Lipton \cite{KannanLipton80,KannanLipton86} show that the so-called \emph{orbit problem} is decidable (in polynomial time), implying an algorithm for checking whether a matrix has finite order.

Avoiding this problem, Mandel and Simon~\cite{MandelSimon77} showed in~1977 that there exists a function $f : \N^3 \to \N$ such that if $S$~is a finite subsemigroup of~$\F^{n \times n}$, generated by $m$ of its elements, and the subgroups of~$S$ have order at most~$g$, then $S$~has size (cardinality) at most~$f(n,m,g)$.
For rational matrices, one may use the function~$g(n)$ from above for~$g$.
By making, in a sense, McNaughton and Zalcstein's proof quantitative, Mandel and Simon explicitly construct such a function~$f$, which implies an algorithm, with bounded runtime, for deciding finiteness of a finitely generated rational matrix semigroup.
A similar result about the decidability of this problem was obtained independently and concurrently by Jacob~\cite{Jacob77}.

\paragraph*{Size Bounds}

Unlike the function~$g$ for rational matrix groups, Mandel and Simon's function~$f(n,m,g)$ depends on~$m$, the number of generators.
This is unavoidable: the semigroup generated by the set $\M_m := \left\{\begin{pmatrix} 0 & i\\0&0\end{pmatrix} : i \in \{0, \ldots, m-1\}\}\right\}$ is the set $\M_m$ itself, with $|\M_m| = m$ for any $m \in \N$.
Further, the growth in~$n$ of Mandel and Simon's~$f$ is, roughly, a tower of exponentials of height~$n$.
They write in~\cite[Section~3]{MandelSimon77}: ``However, it is likely that our upper bound [$f(n,m,g)$] can be significantly improved.''

In~\cite[Chapter~VI]{BerstelReutenauer}, Berstel and Reutenauer also show, for the rational case, the existence of a function in $n$ and~$m$ that bounds the semigroup size.
They write: ``As we shall see, the function [\ldots] grows extremely rapidly.''
An analysis of their proof shows that the growth of their function is comparable with the growth of Mandel and Simon's function.
A related approach is taken in~\cite{Straubing}.
Further proofs of McNaughton and Zalcstein's result can be found, e.g., in \cite{Lallement79,FreedmanGG97,deLucaVarricchio99,Steinberg12}, but they do not lead to better size bounds.

\paragraph*{Length Bounds}

In~1991, Weber and Seidl~\cite{WeberSeidlITA} considered semigroups over \emph{nonnegative} integer matrices.
Using combinatorial and automata-theoretic techniques, they showed that if a finite set $\M \subseteq \N^{n \times n}$ generates a finite monoid, then for any matrix~$M$ of that monoid there are $M_1, \ldots, M_\ell \in \M$ with $\ell \le \lceil e^2 n!\rceil - 2$ such that $M = M_1 \cdots M_\ell$; i.e., any matrix in the monoid is a product of matrices in~$\M$ whose \emph{length} is at most $\lceil e^2 n!\rceil - 2$.
Note that this bound does not depend on the number of generators.
Weber and Seidl also give an example that shows that such a length bound cannot be smaller than $2^{n-2}$.

Almeida and Steinberg~\cite{AlmeidaSteinberg09} proved in~2009 a length bound for rational matrices and expressing the zero matrix: if a finite set $\M \subseteq \Q^{n \times n}$ (with $n>1$) generates a \emph{finite} semigroup that includes the zero matrix~$0$, then there are $M_1, \ldots, M_\ell \in \M$ with $\ell \le (2 n - 1)^{n^2} - 1$ such that $0 = M_1 \cdots M_\ell$.
A length bound of $n^5$ for expressing the zero matrix was recently given in the nonnegative integer case~\cite{19KM-STACS}.
It is open whether there is a polynomial length bound for expressing the zero matrix in the rational case.


\paragraph*{Our Contribution}

We prove a $2^{O(n^2 \log n)}$ length bound for the rational case:
\begin{ourtheorem} \label{thm-main-intro}
Let $\M \subseteq \Q^{n \times n}$ be a finite set of rational matrices such that $\M$ generates a finite semigroup~$\mon{\M}$.
Then for any $M \in \mon{\M}$ there are $M_1, \ldots, M_\ell \in \M$ with $\ell \le \ourbound \in 2^{O(n^2 \log n)}$ such that $M = M_1 \cdots M_\ell$.
(Here $g(n) \le (2 n)!$ denotes the order of the largest finite subgroup of~$\GL(n,\Q)$.)
\end{ourtheorem}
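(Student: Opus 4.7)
The plan is induction on $n$. The base case $n=1$ is immediate since a finite multiplicative subsemigroup of $\Q$ lies in $\{-1,0,1\}$, so every element admits a representation of length one.

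For the inductive step, take a shortest product $M = N_1 \cdots N_k$ with $N_i \in \M$; by minimality the prefixes $P_i := N_1 \cdots N_i$ are pairwise distinct (else we could cut out a cycle). Set $V_i := \im P_i$. Because $V_{i+1} = P_i(\im N_{i+1}) \subseteq V_i$, the chain $\Q^n = V_0 \supseteq V_1 \supseteq \cdots \supseteq V_k$ is weakly decreasing and strictly decreases at most $n$ times. Thus $[0,k]$ splits into at most $n+1$ \emph{blocks} on each of which the prefix image is constant, with block images $W_0 \supsetneq \cdots \supsetneq W_r$ for some $r \le n$.

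Within a block of constant image $W$ of dimension $d$, the idempotent power $E := P_a^\omega$ of the block's first prefix $P_a$ is a projector onto $W$, and its $\mathcal{H}$-class in $\mon{\M}$ is a group which, restricted to $W$, embeds into $\GL(d,\Q)$ and therefore has order at most $g(d) \le g(n)$. The plan is to show that the distinct prefixes inside the block live in a controlled number of $\mathcal{H}$-classes sitting above $E$, each of size at most $g(n)$, with the number of $\mathcal{H}$-classes needed bounded by the inductive hypothesis applied on the quotient $\Q^n / W$ of dimension $n - d$. This yields a per-block length of roughly $g(n) \cdot f(n-1) \cdot 2^{O(n)}$, where $f$ is the bound we are proving and the $2^{O(n)}$ factor absorbs the cost of encoding a complementary basis.

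Summing across the $\le n+1$ blocks (transitions included) and unfolding the recursion telescopes to $f(n) \le 2^{O(n^2)} g(n)^{n+1}$, matching $\ourbound$ up to bookkeeping of constants. The main obstacle is the kernel analysis inside a constant-image block: images coincide but kernels need not, and expressing each kernel-change as a bona fide lower-dimensional instance of the theorem — with a generating set on $\Q^n / W$ that remains controlled by $\M$ — is the principal technical step. In particular, one must verify that the quotient construction genuinely produces a finite matrix semigroup of dimension $n-d$ so that the inductive hypothesis applies, and that the resulting $\mathcal{H}$-class counts telescope properly across the blocks rather than compounding.
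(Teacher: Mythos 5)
Your induction on dimension with a chain of prefix-images is a genuinely different strategy from the paper, which inducts on rank $r$ from $n$ down to $0$ and, for each fixed rank, analyses a graph of image-spaces via exterior algebra (bounding the number of SCCs and the diameter inside an SCC by $\binom n r$, and handling cycles within an SCC by the group bound $g(n)$). But your proposal contains a gap at exactly the point you flag as ``the principal technical step,'' and I do not see how to close it along the lines you sketch.

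First, the specific claim that $E := P_a^\omega$ is a projector onto $W$ is false. Knowing $\im P_a = W$ does not give $\im P_a^2 = W$: the chain $\im P_a \supseteq \im P_a^2 \supseteq \cdots$ can decrease strictly, so $\im P_a^\omega$ may be a proper subspace of~$W$ (indeed it is $\im P_a^m$ for large $m$, which stabilises well below $W$ in general). Consequently the $\mathcal{H}$-class group you attach to $E$ lives in $\GL(\im P_a^\omega)$, not $\GL(W)$, and the block structure built on $E$ does not anchor to~$W$ as intended.

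Second, and more fundamentally, the quotient $\Q^n/W$ does not carry an action of the generating set $\M$: the generators $N_i$ have no reason to preserve $W$, so there is no well-defined finite matrix semigroup in dimension $n-d$ to which the inductive hypothesis could apply. Even the within-block generators $N_{a+1},\dots,N_b$ only satisfy the weak condition $P_a(\im N_{a+1}\cdots N_i) = W$; they need not stabilise $W$ individually. Without a genuine lower-dimensional semigroup, the ``controlled number of $\mathcal{H}$-classes'' step has no inductive input. Note also that within a block, the prefixes $P_a,\dots,P_b$ share the image $W$ but can have arbitrary kernels and arbitrary actions $\Q^n/\ker \to W$, so the set of distinct matrices in a finite semigroup with a given image is not bounded by $g(\dim W)$ or anything controlled just by the block; some additional structural lemma is needed (this is precisely what the paper's exterior-algebra bounds on the image-graph supply in the fixed-rank case). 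As a secondary issue, if the per-block cost really is $(n+1)\cdot g(n)\cdot f(n-1)\cdot 2^{O(n)}$, unfolding yields a factor of $(n+1)!\prod_{k\le n} g(k)$, which is larger than the stated $g(n)^{n+1}$; this is repairable bookkeeping only if the main gap is filled.
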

The example by Weber and Seidl mentioned above shows that any such
length bound must be at least $2^{n-2}$. A length bound trivially implies a size bound, and \autoref{thm-main-intro} allows us to obtain the
first significant improvement over the fast-growing function of Mandel
and Simon.
\begin{ourcorollary}\label{cor-card-bound}
  Let $\M \subseteq \Q^{n\times n}$ be a finite set of $m$ rational
  matrices that generate a finite semigroup $\mon{\M}$. Then
  $|\mon{\M}| \le m^{2^{O(n^2 \log n)}}$.
\end{ourcorollary}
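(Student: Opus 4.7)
The plan is a direct counting argument based on \autoref{thm-main-intro}. By that theorem, every element of $\mon{\M}$ equals a product $M_1 \cdots M_\ell$ of at most $L := \ourbound$ matrices from $\M$, so the evaluation map from $\bigcup_{k=0}^{L} \M^k$ to $\mon{\M}$ sending a sequence to its product is surjective. This yields the elementary estimate
\[
  |\mon{\M}| \;\le\; \sum_{k=0}^{L} m^{k} \;\le\; (L+1)\, m^{L}.
\]

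It then remains only to absorb this estimate into the form $m^{2^{O(n^2 \log n)}}$. For $m \ge 2$, I would bound $(L+1)\, m^{L} \le m^{L + \log_2(L+1)} \le m^{2L}$, and since $L \in 2^{O(n^2 \log n)}$ by \autoref{thm-main-intro} one has $2L \in 2^{O(n^2 \log n)}$ as well, giving the claim. The degenerate case $m = 1$ deserves a brief separate remark, since then $m^{2^{O(n^2 \log n)}} = 1$ whereas $\mon{\M}$ is the cyclic semigroup of a single torsion matrix, of size at most $L = 2^{O(n^2 \log n)}$; this is consistent with the stated asymptotic once one reads the bound with the convention $m \ge 2$, or equivalently as the uniform statement $|\mon{\M}| \le (L+1) \, m^{L}$.

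All the substantive work sits in \autoref{thm-main-intro}, so there is no real obstacle in the corollary itself: it is pure bookkeeping once the length bound is in hand. The only care required is to check that the logarithmic factor coming from summing the geometric series does not leave the asymptotic class $2^{O(n^2 \log n)}$ in the exponent, which is immediate.
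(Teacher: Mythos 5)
Your proof is correct and is exactly the ``trivial'' counting argument the paper has in mind---the paper states this corollary without a separate proof, remarking only that ``a length bound trivially implies a size bound.'' Your observation about the degenerate case $m=1$ is a fair point of care, but as you note it does not affect the asymptotic claim under the natural reading.
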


The proof of \autoref{thm-main-intro} is largely based on linear-algebra
arguments, specifically on the structure of a certain graph of vector
spaces obtained from $\M$. This graph was introduced and analysed
by Hrushovski et al.~\cite{Hrushovski18} for the computation of
the \emph{Zariski closure} of the generated matrix semigroup.

After the preliminaries (\autoref{sec-prelim}) and the proof of \autoref{thm-main-intro} (\autoref{sec-main}), we discuss applications in automata theory (\autoref{sec-applications}).
In particular we show that our result implies the first elementary-time algorithm for deciding finiteness of weighted automata over the rationals.

\section{Preliminaries} \label{sec-prelim}

We write $\N = \{0, 1, 2, \ldots\}$.
For a finite alphabet~$\Sigma$, we write $\Sigma^* = \{a_1 \cdots a_k : k \ge 0,\ a_i \in \Sigma\}$ and $\Sigma^+ = \{a_1 \cdots a_k : k \ge 1,\ a_i \in \Sigma\}$ for the free monoid and the free semigroup generated by~$\Sigma$. The elements of~$\Sigma^*$ are called \emph{words}.
For a word $w = a_1 \cdots a_k$, its \emph{length}~$|w|$ is~$k$.
We denote by~$\varepsilon$ the empty word, i.e., the word of length~$0$.
For $L \subseteq \Sigma^*$, we also write $L^* = \{w_1 \cdots w_k : k
\ge 0,\ w_i \in L\} \subseteq \Sigma^*$ and $L^+ = \{w_1 \cdots w_k : k
\ge 1,\ w_i \in L\} \subseteq \Sigma^*$.

We denote by $I_n$ the $n\times n$-identity matrix, and by~$\vzero$
the zero vector.
For vectors $v_1, \ldots, v_k$ from a vector space, we denote their span by $\spa{v_1, \ldots, v_k}$. In this article, we view elements
of $\Q^n$ as \emph{row} vectors.

For some $n \in \N \setminus \{0\}$, let $\M \subseteq \Q^{n \times
  n}$ be a finite set of rational matrices, generating a finite
semigroup~$\mon{\M}$.
For notational convenience, throughout the paper, we associate to~$\M$
an alphabet~$\Sigma$ with $|\M| = |\Sigma|$, and a  bijection $M :
\Sigma \to \M$ which we extend to the monoid morphism $M : \Sigma^*
\to \mon{\M}  \cup \{ I_n \}$.
Thus we may write $M(\Sigma)$ and $M(\Sigma^*)$ for $\M$ and~$\mon{\M}
 \cup \{ I_n \}$, respectively.

We often identify a matrix $A \in \Q^{n \times n}$ with its linear transformation $A : \Q^n \to \Q^n$ such that $x \mapsto x A$ for row vectors $x \in \Q^n$.
To avoid clutter, we extend linear-algebra notions from matrices to words, i.e., we may write $\im w$, $\ker w$, $\rk w$ for the image $\im(M(w)) = \Q^n M(w)$, the kernel $\ker(M(w)) = \{x \in \Q^n : x M(w) = \vzero\}$, and the rank of~$M(w)$.

If all matrices in $M(\Sigma)$ are invertible and $M(\Sigma^*)$ is finite, then $M(\Sigma^*)$ is a finite subgroup of~$\GL(n,\Q)$.
For $n \in \N$, let us write $g(n)$ for the size of the largest finite subgroup of~$\GL(n,\Q)$.
As discussed in the introduction, a non-trivial but elementary proof shows $g(n) \le (2 n)!$, and it is known that $g(n) = 2^n n!$ holds for sufficiently large~$n$.


\paragraph*{Exterior Algebra} \label{sub:exterior}

This brief introduction is borrowed and slightly extended from~\cite[Section~3]{Hrushovski18}.
Let $V$ be an $n$-dimensional vector space over a field~$\F$. 
(We will only consider $V = \Q^n$.)
For any $r \in \N$, let $\A_r$ denote the set of maps $B : V^r \to \F$ so that $B$ is linear in each argument and further $B(v_1, \ldots, v_r) = 0$ holds whenever $v_i = v_{i+1}$ holds for some $i \in \{1, \ldots, r-1\}$.
These conditions imply that swapping two adjacent arguments changes the sign, i.e.,
\[
  B(v_1, \ldots, v_{i-2}, v_{i-1}, v_{i+1}, v_i, v_{i+2}, v_{i+3}, \ldots, v_r) \ = \
 -B(v_1, \ldots, v_r)\,.
\]
These properties of~$\A_r$ imply that, given an arbitrary basis $\{e_1, \ldots, e_n\}$ of~$V$, any $B \in \A_r$ is uniquely determined by all $B(e_{i_1}, \ldots, e_{i_r})$ where $1 \le i_1 < i_2 < \ldots < i_r \le n$.
For any $v_1, \ldots, v_r \in V$, define the \emph{wedge product}
\[
 v_1 \wedge \cdots \wedge v_r : \A_r \to \F \quad \text{by} \quad (v_1 \wedge \cdots \wedge v_r)(B) = B(v_1, \ldots, v_r)\,.
\]
It follows from the properties of~$\A_r$ above that the wedge product
is linear in each argument: if $v_i = \lambda u + \lambda' u'$ then
\[
  \Big(\bigwedge_{1\le i \le k} v_i\Big)(B) = \lambda \Big(\bigwedge_{1\le j<i}
    v_j \wedge u \wedge \bigwedge_{i<j\le k} v_j\Big)(B) +
    \lambda' \Big(\bigwedge_{1\le j<i}
    v_j \wedge u' \wedge \bigwedge_{i<j\le k} v_j\Big)(B)
\]
Moreover, $(v_1 \wedge \cdots \wedge v_r)(B) = 0$ if $v_i = v_j$ holds for some $i,j$ with $i \ne j$.

For $r \in \N$ define $\Lambda^r V$ as the vector space generated by the length-$r$ wedge products $v_1 \wedge \cdots \wedge v_r$ with $v_1, \ldots, v_r \in V$.
For any basis $\{e_1, \ldots, e_n\}$ of~$V$, the set
$\{e_{i_1} \wedge \cdots \wedge e_{i_r} : 1 \le i_1 < \ldots < i_r \le n\}$
is a basis of $\Lambda^r V$; hence $\dim \Lambda^r V = \binom n r$.
Note that $\Lambda^1 V = V$ and $\binom n r = 0$ for $r > n$.
One can view the wedge product as an associative operation $\mathord{\wedge} : \Lambda^r V \times \Lambda^\ell V \to \Lambda^{r+\ell} V$.
Define the \emph{exterior algebra of~$V$} as the direct sum $\Lambda V = \Lambda^0 V \oplus \Lambda^1 V \oplus \cdots$.
Then also $\mathord{\wedge} : \Lambda V \times \Lambda V \to \Lambda V$.

It follows that for $u_1, \ldots, u_r \in V$, we have $u_1 \wedge \cdots \wedge u_r \ne \vzero$ if and only if $\{u_1, \ldots, u_r\}$ is linearly independent.
Furthermore, for $u_1, \ldots, u_r, v_1, \ldots, v_r \in V$ and $u = u_1 \wedge \cdots \wedge u_r \ne \vzero$ and $v = v_1 \wedge \cdots \wedge v_r \ne \vzero$, we have that $u,v$ are scalar multiples if and only if $\spa{u_1, \ldots, u_r} = \spa{v_1, \ldots, v_r}$.

The Grassmannian $\Gr(n)$ is the set of subspaces of~$\Q^n$.
By the above-stated properties of the wedge product there is an injective function
\[
\iota : \Gr(n) \to \Lambda\,\Q^n
\]
such that, for all~$W \in \Gr(n)$, we have $\iota(W) = v_1 \wedge \cdots \wedge v_r$ where $\{v_1, \ldots, v_r\}$ is an arbitrarily chosen basis of~$W$.
Note that the particular choice of a basis for~$W$ only changes the value of~$\iota(W)$ up to a constant.
Given subspaces $W_1, W_2 \in \Gr(n)$, we moreover have $W_1 \cap W_2 = \triv$ if and only if $\iota(W_1) \wedge \iota(W_2) \ne \vzero$.

\section{Proof of \autoref{thm-main-intro}} \label{sec-main}

It is convenient to state and prove our main result in terms of monoids rather than semigroups:%
\begin{ourtheorem}\label{thm-main}
Let $M : \Sigma^* \to \Q^{n \times n}$ be a monoid morphism whose image $M(\Sigma^*)$ is finite.
Then for any $w \in \Sigma^*$ there is $u \in \Sigma^*$ with $M(w) = M(u)$ and
\[
 |u| \ \le \ \ourbound \in 2^{O(n^2 \log n)}\,.
\]
\end{ourtheorem}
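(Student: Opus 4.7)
The plan is to induct on the dimension~$n$, using the group bound~$g(n)$ for the full-rank portion of a word and invoking the inductive hypothesis for the rank-reducing portion. Writing $T(n)$ for the best length bound in dimension~$n$, I aim to establish a recursion roughly of the form $T(n) \le 2^{O(n)}\, g(n)\, T(n-1)$, which unrolls to the claimed $\ourbound$.

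First I would handle the invertible case. If $\rk w = n$, then each letter of $w$ must correspond to an invertible matrix (since a product of $n \times n$-matrices has full rank only when every factor does), so the invertible letters in~$\Sigma$ generate a finite subgroup of~$\GL(n,\Q)$ of order at most~$g(n)$. A standard shortest-representative argument in a finite group yields a replacement~$u$ with $|u| \le g(n) - 1$, which serves as the base of the induction at the top rank.

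For general~$w = a_1 \cdots a_\ell$, I would stratify by rank. The sequence $\dim \im(a_1 \cdots a_i)$ is non-increasing and drops at most~$n$ times, so $w$ factors as $w = u_0 b_1 u_1 \cdots b_k u_k$ with $k \le n$, each~$b_j$ a rank-dropping letter and each~$u_j$ a maximal rank-preserving block of some constant rank~$r_j$. The letters~$b_j$ contribute at most~$n$ symbols overall; the real work is in bounding each~$|u_j|$. Inside~$u_j$, consecutive matrices act as isomorphisms between $r_j$-dimensional subspaces of~$\Q^n$. Via the embedding $\iota : \Gr(n) \to \Lambda\,\Q^n$ from the preliminaries, each such subspace corresponds to a ray in $\Lambda^{r_j}\,\Q^n$; whenever two prefixes of~$u_j$ yield the same subspace, the intervening product restricts to an automorphism in~$\GL(r_j,\Q)$, lying in a finite subgroup of order at most $g(r_j) \le g(n)$. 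Combined with a change-of-basis reduction of the restricted action to an instance in dimension~$r_j \le n-1$ (to which the induction hypothesis applies), this suggests a bound of the form $|u_j| \le 2^{O(n)}\, g(n)\, T(n-1)$.

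The main obstacle is genuinely reducing a rank-preserving block to a lower-dimensional instance of the theorem: the block lives naturally in~$\Q^{n \times n}$, and its image subspace can vary along the block, so a single change of basis does not work. The Hrushovski graph of vector spaces from~\cite{Hrushovski18}---which organises the reachable subspaces together with the letter-labelled transitions between them---is the structural tool that bridges this gap, simultaneously controlling the number of distinct subspaces appearing in a block and the way the block can be shortened while preserving its matrix product. Combining the per-block bound above over at most~$n+1$ blocks with the~$\le n$ rank-dropping letters yields the desired recursion, and unrolling gives~$\ourbound$.
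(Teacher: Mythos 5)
Your plan inducts on the dimension $n$, aiming to shorten each rank-preserving block $u_j$ by a change of basis down to an $r_j$-dimensional instance of the theorem, to which $T(n-1)$ would apply. You yourself flag the main obstacle---the image subspace of the running prefix varies along $u_j$, so no single change of basis brings the block into $\GL(r_j,\Q)$---and gesture towards the Hrushovski graph as the fix, but that is where the proposal breaks down: the Hrushovski machinery does not perform a dimension reduction. In the paper the induction is over the \emph{rank} $r$, with $n$ held fixed. A word $w$ of rank $\ge r$ is decomposed as $w = w_0\, a_1 w_1\, a_2 w_2 \cdots a_k w_k$ with each $w_i$ of rank strictly greater than $r$ (shortened via the bound $d_{r+1}$) and each $a_i w_i$ of rank exactly $r$; one then introduces a \emph{new alphabet} $\Sigma_r$ whose letters are the rank-$r$ matrices $M(a_i u_i)$, and applies the maximum-rank proposition to words over $\Sigma_r$. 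That bundling step is essential and is absent from your proposal: the SCC-counting lemma and the SCC-diameter lemma require all edge labels to have the same rank, so that all the relevant kernels sit in the fixed space $\Lambda^{n-r}\Q^n$; inside your rank-preserving block the letters come straight from $\Sigma$ and can have arbitrary ranks $\ge r_j$, so those lemmas do not apply as stated.

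A second, related issue is the factor $T(n-1)$ in your claimed per-block bound $|u_j| \le 2^{O(n)}\,g(n)\,T(n-1)$. In the paper, the maximum-rank bound for a block of constant rank is $2^{O(n)} g(n)$ outright, coming from the SCC count, the SCC diameter, and the cycle-group lemma; the recursion $d_r \le d_{r+1} + (d_{r+1}+1)h$ multiplies by the bound for \emph{strictly higher rank in the same dimension $n$}, not for lower dimension. Your inductive shape would deliver the right order $2^{O(n^2\log n)}$ if the per-block bound held, but the proposal neither establishes that bound nor explains how the dimension reduction would actually be carried out, so there is a genuine gap at the centre of the argument.
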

With this theorem at hand, \autoref{thm-main-intro} follows
immediately:
\begin{proof}[Proof of \autoref{thm-main-intro}]
  Let $M \in \mon\M$ be an element of the semigroup generated by
  $\M$. If $M \neq I_n$, by \autoref{thm-main}, $M$ can be written as a
  short product. Otherwise, $M = I_n \in G$, where
  $G=\mon{\M \cap \GL(n,\Q)}$ is a finite group of order at most
  $g(n)$. For any product $M_1\cdots M_\ell$ with $\ell > g(n)$, there
  are $1\le i<j\le \ell$ such that $M_1\cdots M_i = M_1\cdots M_j$,
  and so $M_1\cdots M_\ell = M_1\cdots M_iM_{j+1}\cdots
  M_\ell$. Hence, there are $\ell \in \{1, \ldots, g(n)\}$ and
  $M_1,\ldots,M_\ell \in \M$ such that $M = I_n = M_1\cdots M_\ell$.
\end{proof}

\begin{ourremark}\label{rem-group-diameter}
  The same argument as in the proof above shows that in a finite
  \emph{monoid} $(H, \mathord{\cdot})$, generated by $G \subseteq H$,
  for any $h \in H$ there are $\ell \in \{0,\ldots, |H| - 1\}$ and
  $g_1, \ldots, g_\ell \in G$ with
  $h = g_1 \cdots g_\ell$.
\end{ourremark}

In the remainder of this section, we prove \autoref{thm-main}.
We assume that
$M : \Sigma^* \to \Q^{n \times n}$ is a monoid morphism with finite
image~$M(\Sigma^*)$.

\subsection{The Maximum-Rank Case} \label{sub-maximum-rank-case}

In this subsection we prove:
\begin{ourproposition} \label{prop-max-rank}
Suppose that there is $r \le n$ with $\rk a = r$ for all $a \in \Sigma$.
Let $w \in \Sigma^*$ with $\rk w = r$.
Then there is $u \in \Sigma^*$ with $M(w) = M(u)$ and
\[
 |u| \ \le \ 2^{2 n + 3} g(n) - 1
 \ \in \ 2^{O(n \log n)}\,.
\]
\end{ourproposition}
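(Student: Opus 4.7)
Plan: Since every $M(a)$ for $a\in\Sigma$ has rank $r$ and rank is sub-multiplicative, the assumption $\rk w = r$ forces every prefix $a_1\cdots a_i$ of $w = a_1\cdots a_k$ to have rank exactly $r$. Consequently $V_i := \im(M(a_1\cdots a_i))$ is $r$-dimensional for $i\ge 1$, with $V_0 = \Q^n$, and the sequence $V_0,V_1,\ldots,V_k$ is a walk through the finite set $\mathcal V\subseteq\Gr(n)$ of $r$-dimensional subspaces arising as images of rank-$r$ words (finite since $M(\Sigma^*)$ is). For each $V\in\mathcal V$, the \emph{stabiliser}
\[
  G_V \ := \ \{\,M(u)|_V : u\in\Sigma^+,\ V\cdot M(u) = V\,\}
\]
is a finite subsemigroup of $GL(V)\cong GL(r,\Q)$ consisting of invertibles, hence a finite group, with $|G_V|\le g(r)\le g(n)$.

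The core compression is a pumping argument on partial products: if $M(a_1\cdots a_i) = M(a_1\cdots a_j)$ for some $1\le i<j\le k$, then deletion of $a_{i+1}\cdots a_j$ preserves $M(w)$. Consequently, the shortest word equivalent to $w$ has length at most the number of distinct rank-$r$ matrices in $M(\Sigma^*)$. Any such matrix $A$ is determined by the triple consisting of its image $\im A\in\mathcal V$, its kernel $\ker A\in\mathcal K$ (where $\mathcal K$ denotes the analogously defined finite set of $(n-r)$-dim kernels), and the induced isomorphism $\bar A : \Q^n/\ker A\to\im A$; for fixed $(V,K)$ the $\bar A$'s arising in the semigroup form (a subset of) a $G_V$-torsor under the right action $A\mapsto A\cdot M(u)$ with $M(u)|_V\in G_V$, and so number at most $|G_V|\le g(n)$. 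This yields the headline count
\[
  |M(\Sigma^*)_{\mathrm{rank}=r}| \ \le \ |\mathcal V|\cdot|\mathcal K|\cdot g(n)\,.
\]

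The principal obstacle is then the structural bound $|\mathcal V|\cdot|\mathcal K|\le 2^{2n+3}$. I would approach this through the Pl\"ucker embedding $\iota : \Gr(n)\to\Lambda^r\Q^n$ of Section~\ref{sub:exterior}: elements of $\mathcal V$ correspond to projective points in the $\binom{n}{r}\le 2^n$-dim exterior power $\Lambda^r\Q^n$, on which $M(\Sigma^*)$ acts linearly via $\Lambda^r$. The finite projective orbit of any starting point under this linear action should admit an exponential bound, e.g.\ via a Zariski-closure or minimal-polynomial argument on $\Lambda^r\Q^n$, yielding $|\mathcal V|\le 2^{O(n)}$; a symmetric argument (on the dual/transpose action, equivalently on the reversed walk) controls $|\mathcal K|$. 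This delicate structural step is the main substance of the proof and is the natural place to invoke the graph-of-vector-spaces analysis of Hrushovski et al.~\cite{Hrushovski18}. Once the subspace bounds are in hand, the pumping estimate turns them into the stated length bound $|u|\le 2^{2n+3}g(n) - 1$.
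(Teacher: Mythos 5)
Your opening reductions are sound (all prefixes of length $\ge 1$ have rank $r$; deleting a stretch between two equal prefix-products preserves $M(w)$; cycles around a fixed image generate a finite group of order $\le g(r) \le g(n)$), but the plan then pivots to bounding the \emph{cardinality} of the set of rank-$r$ matrices in $M(\Sigma^*)$, via $|\mathcal V|\cdot|\mathcal K|\cdot g(n)$ and a hoped-for bound $|\mathcal V|\cdot|\mathcal K|\le 2^{2n+3}$. This cannot work: that cardinality, and $|\mathcal V|$ in particular, can grow with the number of generators $m$, whereas the target length bound must be independent of $m$. Concretely, take $n=2$, $r=1$, and $\Sigma=\{a_1,\dots,a_m\}$ with
\[
  M(a_i) \ = \ \begin{pmatrix}1 & i\\ 0 & 0\end{pmatrix}\,.
\]
Then $M(a_i)M(a_j)=M(a_j)$, so $M(\Sigma^*)=\{I_2\}\cup\{M(a_1),\dots,M(a_m)\}$ is finite for every $m$. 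Every $M(a_i)$ has rank $1$, its image is $\spa{(1,i)}$, and these images are pairwise distinct, so $|\mathcal V|=m$. Thus $|\mathcal V|\cdot|\mathcal K|\ge m$ is unbounded as $m\to\infty$, and more fundamentally the number of distinct rank-$1$ semigroup elements is $m$, so the pigeonhole-on-prefixes bound $|u|\le |M(\Sigma^*)_{\mathrm{rank}=r}|$ is not independent of $m$. (Of course the proposition still holds here, since $M(w)=M(a_k)$ always collapses to a single letter; it is the proposed route that breaks.) The Pl\"ucker/Zariski argument you sketch at the end does not save this: the finite projective orbit in $\Lambda^1\Q^2$ in the example above literally has $m$ points.

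The paper avoids this trap by never trying to bound the number of vertices of the graph~$G$. Instead it bounds three \emph{structural} quantities that control paths in~$G$ regardless of how many vertices $G$ has: the number of distinct SCCs any rank-preserving path can visit is at most $2\binom{n}{r}$ (via a linear-independence argument on kernels in $\Lambda^{n-r}\Q^n$), the diameter of a single SCC is at most $\binom{n}{r}$, and the number of cycles that need to be retained around a single vertex is at most $g(n)-1$ (because cycles around a fixed image induce a finite subgroup of $\GL(r,\Q)$). These three bounds compose to give the stated length bound, without any appeal to the cardinality of $\mathcal V$ or $\mathcal K$. To fix your proof you would need to replace the cardinality argument with a path-compression argument of this kind; the group-of-cycles observation you already made is the right ingredient for the within-SCC part, but you also need the SCC-counting and SCC-diameter lemmas to handle the across-SCC part.
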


In this subsection we assume that $\rk a = r$ holds for all $a \in \Sigma$.
For the proof of \autoref{prop-max-rank}, we define a directed labelled
graph
$G$ whose vertices are the vector spaces $\im w$ for $w \in \Sigma^*$
such that $\rk w = r$, and whose edges are triples $(V_1, a, V_2)$
such that $a \in \Sigma$ and $V_1 M(a) = V_2$.
Let $(V_1, a, V_2)$ be an edge; then $V_2 \subseteq \im a$, but $\dim V_2 = r = \rk a = \dim \im a$, hence $V_2 = \im a$, i.e., the edge label determines the edge target.
We will implicitly use the fact that any path in~$G$ is determined by its start vertex and the sequence of its edge labels.
Note that if $V_1$ is a vertex and $a \in \Sigma$, the edge $(V_1, a,
\im a)$ is present in $G$ if and only if $\rk V_1M(a) = r$ if and only if
$V_1 \cap \ker a = \triv$.

The following two lemmas, which are variants of lemmas in~\cite[Section~6]{Hrushovski18}, are statements about the structure of~$G$ in terms of its strongly connected components (SCCs).

\begin{ourlemma} \label{lem-number-of-SCCs}
Let $w = w_1 \cdots w_k$ for 
$w_1, \ldots, w_k \in \Sigma^+$ with $\rk w = r$ such that the $k$~vertices $\im w_1, \ldots, \im w_k$ are all in different SCCs of~$G$.
Then $k \le 2 \binom{n}{r}$.
\end{ourlemma}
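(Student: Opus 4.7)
The plan is to translate the hypothesis into a strict chain in the condensation DAG of $G$ and then bound the length of any such chain by $2\binom{n}{r}$ using invariants living in the exterior algebra $\Lambda^r \Q^n$, which has dimension~$\binom{n}{r}$.

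First I would check that reading $w$ letter by letter in $G$ traces a path visiting $\im w_1, \im w_2, \ldots, \im w_k$ in order. Since every letter has rank~$r$ and $\rk w = r$, every prefix $w_1 \cdots w_i$ has rank at most~$r$ (since each letter has rank~$r$) and at least~$r$ (since $\rk w = r$ and the rank of a product is bounded by the rank of any factor). Combined with $\im(w_1 \cdots w_i) \subseteq \im w_i$ and both spaces having dimension~$r$, this forces $\im(w_1 \cdots w_i) = \im w_i =: V_i$. Thus the letter-by-letter path in~$G$ obtained from~$w$ visits the vertices $V_1, \ldots, V_k$ in order, and since these lie in distinct SCCs $C_1, \ldots, C_k$ by hypothesis, the SCCs form a strict chain $C_1 \to C_2 \to \cdots \to C_k$ in the condensation DAG of~$G$.

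To bound $k$, I would set up two monotone exterior-algebra invariants. For each vertex $V$ of~$G$, let $B(V) = \{V' : V' \to^* V\}$ and $F(V) = \{V' : V \to^* V'\}$ be its backward and forward closures, and define the subspaces $T(V) := \spa{\iota(V') : V' \in B(V)}$ and $S(V) := \spa{\iota(V') : V' \in F(V)}$ of~$\Lambda^r \Q^n$. Along the chain, $B(V_i) \subseteq B(V_{i+1})$ while $F(V_i) \supseteq F(V_{i+1})$, so $T(V_i) \subseteq T(V_{i+1})$ and $S(V_i) \supseteq S(V_{i+1})$ monotonically. Setting $\phi(V) := \dim T(V) + \bigl(\binom{n}{r} - \dim S(V)\bigr)$, the potential~$\phi$ is weakly monotone along the chain, with $\phi(V_1) \ge 1$ (since $\iota(V_1) \in T(V_1)$) and $\phi(V_k) \le 2\binom{n}{r} - 1$ (since $\iota(V_k) \in S(V_k)$). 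If $\phi$ is strictly increasing at each step, one concludes $k \le \phi(V_k) - \phi(V_1) + 1 \le 2\binom{n}{r} - 1$, which is stronger than needed.

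The main obstacle, and the crux of the proof, is to establish this strict monotonicity: at each step, at least one of the inclusions $T(V_i) \subseteq T(V_{i+1})$ or $S(V_{i+1}) \subseteq S(V_i)$ must be strict. Equivalently, if both $\iota(V_{i+1}) \in T(V_i)$ and $\iota(V_i) \in S(V_{i+1})$, I would need to derive that $V_{i+1} \to^* V_i$, contradicting the SCC hypothesis. The idea is to combine the two assumed linear dependencies in~$\Lambda^r \Q^n$ with the existing edge path $V_i \to^* V_{i+1}$ given by~$w_{i+1}$: applying the rank-1 induced map $\Lambda^r M(w_{i+1})$ to the first dependency, together with a dual manipulation of the second, and then exploiting the finiteness of $M(\Sigma^*)$ to extract a closed loop from within the semigroup's action on the exterior algebra, one should assemble a word taking $V_{i+1}$ back to $V_i$ in~$G$. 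This linear-algebraic extraction is the delicate step, analogous to the SCC-structural analysis in~\cite{Hrushovski18}, and is where I expect the bulk of the work.
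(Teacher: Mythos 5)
The core of your argument is the claimed strict monotonicity of the potential $\phi$, and that is exactly the part you leave unproven. You explicitly flag it as ``the delicate step'' and sketch only that one ``should assemble a word taking $V_{i+1}$ back to $V_i$'' from the two linear dependencies. As it stands this is a genuine gap, not a deferred routine verification: you are trying to infer a \emph{graph-theoretic} reachability fact from a \emph{linear} dependence among the Pl\"ucker vectors $\iota(V')$, and there is no visible test functional or mechanism forcing that conclusion. The danger is clear even in small cases: for $r=1$ the vectors $\iota(V')$ are just nonzero representatives of one-dimensional subspaces, so $T(V_i)=\spa{\iota(V'):V'\to^* V_i}$ saturates to all of $\Lambda^1\Q^n=\Q^n$ as soon as $B(V_i)$ contains $n$ subspaces in general position, and nothing prevents $S$ from saturating likewise; at that point $\phi$ would be constant and your induction collapses. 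That your claimed bound ($2\binom{n}{r}-1$) would be strictly better than what the paper proves ($2\binom{n}{r}$) is itself a warning sign that the strict monotonicity is doing more than the hypotheses support.

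The paper's argument goes in the opposite logical direction, which is what makes it work. It reasons in $\Lambda^{n-r}\Q^n$ (kernel side, same dimension $\binom{n}{r}$) and uses the easy, one-way implication: if $\im w_j$ is not reachable from $\im w_i$, then $\im w_i\cap\ker w_j\neq\{\vec 0\}$, hence $\iota(\im w_i)\wedge\iota(\ker w_j)=\vec 0$. Combined with $\iota(\im w_i)\wedge\iota(\ker w_{i+1})\neq\vec 0$ (because $\rk(w_iw_{i+1})=r$), the fixed vector $\iota(\im w_i)$ serves as an explicit test functional separating $\iota(\ker w_{i+1})$ from $\spa{\iota(\ker w_j):j<i}$. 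Note that this only gives independence from indices $j<i$, not $j\le i$, which is where the factor $2$ in $2\binom{n}{r}$ comes from: the induction shows $\dim\spa{\iota(\ker w_j):j\le i}\ge i/2$. To repair your proof you would need to either supply a concrete test functional establishing your strict monotonicity, or abandon the forward/backward potential in favour of the kernel-side dimension count.
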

\begin{proof}
Let $i \in \{2, \ldots, k-1\}$.
Since $\rk w_i = r = \rk (w_i w_{i+1})$, we have $\im w_i \cap \ker w_{i+1} = \triv$, thus $\iota(\im w_i) \wedge \iota(\ker w_{i+1}) \ne \vzero$.
On the other hand, for any $j < i$, since $\im w_i, \im w_j$ are in different SCCs and $\im w_i$ is reachable from~$\im w_j$, the vertex $\im w_j$ is not reachable from~$\im w_i$; therefore we have $\im w_i \cap \ker w_{j} \ne \triv$, thus $\iota(\im w_i) \wedge \iota(\ker w_{j}) = \vzero$.
It follows that $\iota(\ker w_{i+1}) \not\in \spa{\iota(\ker w_j) : j < i}$.
Indeed, if $\iota(\ker w_{i+1}) = \sum_{j<i} \lambda_j \iota(\ker w_j)$ for some~$\lambda_1, \ldots, \lambda_{i-1}$ then, by linearity of the wedge product, $\iota(\im w_i) \wedge \iota(\ker w_{i+1}) = \sum_{j<i} \lambda_j (\iota(\im w_i) \wedge \iota(\ker w_j)) = \vzero$, a contradiction.

We show by induction on $i$ that
$\dim \spa{\iota(\ker w_j) : j \in \{1, \ldots, i\}} \ge i/2$ for all
$i \in \{1, \ldots, k\}$. This is clear for $i=1,2$. For the induction
step, we have
$\dim \spa{\iota(\ker w_j) : j \in \{1, \ldots, i+1\}} \ge \dim \spa{
  \iota(\ker w_{i+1}), \iota(\ker w_j) : j \in \{1, \ldots, i-1\}} \ge
1 + (i-1)/2 = (i+1)/2$. Hence
$k/2 \le \dim \spa{\iota(\ker w_j) : j \in \{1, \ldots, k\}} \le \dim
\Lambda^{n-r} \Q^n = \binom{n}{r}$.
\end{proof}

\begin{ourlemma} \label{lem-diameter-of-SCC}
Let $a_1 \cdots a_k \in \Sigma^*$ be (the edge labels of) a shortest path in~$G$ from a vertex~$\im a_0$ to~$\im a_k$.
Then $k \le \binom{n}{r}$.
\end{ourlemma}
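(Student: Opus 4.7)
The plan is to mimic the wedge-product strategy of \autoref{lem-number-of-SCCs}, but to leverage shortness (which is a stronger property than merely passing through distinct SCCs) in order to obtain the cleaner bound $k \le \binom{n}{r}$ with no factor of~$2$. Specifically, I will show that the $k$ vectors $\iota(\ker a_1), \ldots, \iota(\ker a_k)$ are linearly independent in $\Lambda^{n-r}\Q^n$, which has dimension $\binom{n}{n-r} = \binom{n}{r}$.

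The two structural facts I would extract first are: (a) since the edge from $\im a_{i-1}$ to $\im a_i$ labelled $a_i$ exists in~$G$, we have $\im a_{i-1} \cap \ker a_i = \triv$, so $\iota(\im a_{i-1}) \wedge \iota(\ker a_i) \ne \vzero$ for each $i \in \{1,\ldots,k\}$; and (b) for $0 \le j < i - 1$, if the edge from $\im a_j$ labelled $a_i$ existed, then replacing $a_{j+1}\cdots a_i$ by just~$a_i$ would yield a strictly shorter path from $\im a_0$ to $\im a_k$, contradicting shortness; hence $\im a_j \cap \ker a_i \ne \triv$, i.e., $\iota(\im a_j) \wedge \iota(\ker a_i) = \vzero$.

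With (a) and (b) in hand, linear independence is a short contradiction argument. Suppose $\sum_{i=1}^{k} \lambda_i \iota(\ker a_i) = \vzero$ is a nontrivial relation and let $i^* = \min\{i : \lambda_i \ne 0\}$. Wedging on the left with $\iota(\im a_{i^*-1})$, every term with $i > i^*$ vanishes, because then $i^* - 1 < i - 1$ and (b) applies. What remains is $\lambda_{i^*}\,\iota(\im a_{i^*-1}) \wedge \iota(\ker a_{i^*}) = \vzero$, contradicting (a) together with $\lambda_{i^*} \ne 0$. Hence the $k$ vectors are independent and $k \le \binom{n}{r}$.

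The main thing to get right is the precise formulation of the shortness obstruction. One has to resist formulating it as ``no letter $b$ with $\im b = \im a_i$ creates a shortcut from $\im a_j$'' (which is true but more than we need); it is enough to notice that the specific letter $b = a_i$ would already give a one-step shortcut, and this single instance yields exactly the wedge-vanishing identity (b) that the linear-algebra argument requires. Everything else — the dimensions of $\im a_i$ and $\ker a_i$ summing to~$n$, so that the wedges live in the one-dimensional space $\Lambda^n\Q^n$ and the characterisation of non-vanishing by trivial intersection applies — is already set up in the preliminaries.
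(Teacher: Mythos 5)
Your proof is correct and follows essentially the same route as the paper's: both arguments extract from shortness the same two wedge-product facts (the diagonal pairings $\iota(\im a_{i-1}) \wedge \iota(\ker a_i)$ are nonzero and the lower-triangular ones vanish) and then deduce linear independence of the $\iota(\ker a_i)$ in $\Lambda^{n-r}\Q^n$. The only cosmetic difference is that you establish independence via a direct minimal-nonzero-coefficient contradiction while the paper does a short induction on the dimension of the span; these are interchangeable packagings of the same staircase argument.
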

\begin{proof}
Let $i \in \{0, \ldots, k-2\}$.
We have $\im a_i \cap \ker a_{i+1} = \triv$, thus $\iota(\im a_i) \wedge \iota(\ker a_{i+1}) \ne \vzero$.
On the other hand, for any $j > i+1$, since $a_{i+1} \cdots a_j$ is a shortest path from $\im a_i$ to~$\im a_j$, there is no edge from $\im a_i$ to~$\im a_j$; therefore we have $\im a_i \cap \ker a_j \ne \triv$, thus $\iota(\im a_i) \wedge \iota(\ker a_j) = \vzero$.
It follows that $\iota(\ker a_{i+1}) \not\in \spa{\iota(\ker a_j) : j > i+1}$.

By induction it follows that $\dim \spa{\iota(\ker a_j) : j \in \{i+1, \ldots, k\}} \ge k-i$ holds for all $i \in \{0, \ldots, k-1\}$.
Hence $k \le \dim \spa{\iota(\ker a_j) : j \in \{1, \ldots, k\}} \le \dim \Lambda^{n-r} \Q^n = \binom{n}{r}$.
\end{proof}

The next lemmas discuss \emph{cycles} $w \in \Sigma^+$ in~$G$, i.e., (the edge labels of) paths in~$G$ such that $\im w \cap \ker w = \triv$.
A cycle~$w$ is said to be \emph{around} $\im w_0$ if $\im w = \im w_0$.
The following lemma says, loosely speaking, that cycles around a single vertex ``generate a group''.

\begin{ourlemma} \label{lem-cycle}
Let $w_0 \in \Sigma^+$ with $\rk w_0 = r$, and let $P \in \Q^{r \times n}$ be a matrix with $\im P = \im w_0$.
Then for every cycle $w \in \Sigma^+$ around~$\im w_0$ there exists a unique invertible matrix $M'(w) \in \GL(r,\Q)$ such that $P M(w) = M'(w) P$.
Moreover, for any nonempty set $C \subseteq \Sigma^+$ of cycles around~$\im w_0$, $M'(C^+)$ is a finite subgroup of~$\GL(r,\Q)$.
\end{ourlemma}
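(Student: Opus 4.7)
The plan is to establish existence, uniqueness, and invertibility of $M'(w)$ for an individual cycle~$w$ around~$\im w_0$, then to verify multiplicativity on concatenations, and finally to conclude the group property by finiteness. Since $w$ is a cycle around $\im w_0$, we have $\im w = \im w_0 = \im P$, so every row of $PM(w)$ lies in $\im M(w) = \im w_0$, i.e., in the row span of~$P$. Because $P$ is an $r\times n$ matrix whose image has dimension~$r$, its rows form a basis of $\im w_0$, so each row of $PM(w)$ has a unique expression as a linear combination of the rows of~$P$; reading these coefficients row-by-row yields a unique $M'(w) \in \Q^{r\times r}$ with $PM(w) = M'(w) P$. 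For invertibility, the cycle condition $\im w \cap \ker w = \triv$ becomes $\im w_0 \cap \ker M(w) = \triv$, so $M(w)$ is injective on $\im w_0$; hence $PM(w)$ has rank~$r$, and since the rows of $P$ are linearly independent this forces $M'(w) \in \GL(r,\Q)$.

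Next I would verify that cycles around $\im w_0$ are closed under concatenation and that $M'$ respects multiplication. For cycles $w_1, w_2$ around~$\im w_0$, the map $M(w_2)$ is injective on $\im w_0 = \im w_2$ (again by the cycle condition) and sends it into $\im w_2 = \im w_0$; hence $\im(w_1 w_2) = \im w_1 \cdot M(w_2) = \im w_0 \cdot M(w_2) = \im w_0$, and an analogous argument shows $\im(w_1 w_2) \cap \ker(w_1 w_2) = \triv$, so $w_1 w_2$ is again a cycle around~$\im w_0$. The computation
\[
PM(w_1 w_2) \;=\; M'(w_1) P M(w_2) \;=\; M'(w_1) M'(w_2) P
\]
combined with uniqueness then yields $M'(w_1 w_2) = M'(w_1) M'(w_2)$.

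Finally, since $P$ has full row rank, the assignment $M(w) \mapsto M'(w)$ is well-defined on $M(C^+)$, so $|M'(C^+)| \le |M(C^+)| \le |M(\Sigma^+)| < \infty$; hence $M'(C^+)$ is a finite subsemigroup of $\GL(r,\Q)$. Any finite subsemigroup of a group is a subgroup --- for any element~$s$ the powers $s, s^2, \ldots$ must eventually repeat, producing both the identity and the inverse of~$s$ inside the subsemigroup --- so $M'(C^+)$ is a finite subgroup of $\GL(r,\Q)$. The only mildly delicate step is the bookkeeping to check that the concatenation of cycles is itself a cycle, which hinges on using $\im w \cap \ker w = \triv$ to promote injectivity of $M(w)$ to the subspace $\im w_0$.
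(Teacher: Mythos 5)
Your proof is correct and follows essentially the same strategy as the paper's: define $M'(w)$ via the unique change-of-basis relation $PM(w)=M'(w)P$, show it is multiplicative using that $P$ has full row rank, and then conclude $M'(C^+)$ is a finite subgroup of $\GL(r,\Q)$. You make one implicit step in the paper explicit, namely that the concatenation of two cycles around $\im w_0$ is again a cycle around $\im w_0$, which is needed to define $M'$ on all of $C^+$; that is worth spelling out. For finiteness, the paper argues by contradiction via a left-inverse factor $B\in\Q^{n\times r}$ with $M(w_0)=BP$, passing from $M'(C^+)$ to $M(w_0 C^+)\subseteq M(\Sigma^*)$. Your route is marginally more direct: the relation $M'(w)P=PM(w)$ with $P$ of full row rank gives a well-defined surjection $M(C^+)\twoheadrightarrow M'(C^+)$, so $|M'(C^+)|\le |M(C^+)|\le |M(\Sigma^*)|<\infty$, bypassing the matrix $B$ entirely. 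Both arguments are sound and of the same character; yours trims a small detour.
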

\begin{proof}
Let $w \in \Sigma^+$ be a cycle around~$\im w_0$.
Since $\im P \cap \ker (M(w)) = \triv$, it follows that $\im (P M(w)) = \im w = \im P$.
So the rows of $P M(w)$ are linear combinations of rows of~$P$, and vice versa, hence there is a unique $M'(w) \in \GL(r,\Q)$ with $P M(w) = M'(w) P$.

Let $C \subseteq \Sigma^+$ be a nonempty set of cycles around~$\im w_0$.
For any $w_1, w_2 \in C$ we have $M'(w_1 w_2) P = P M(w_1 w_2) = P M(w_1) M(w_2) = M'(w_1) P M(w_2) = M'(w_1) M'(w_2) P$, and since the rows of~$P$ are linearly independent, it follows that $M'(w_1 w_2) = M'(w_1) M'(w_2)$.
Thus, $M'(C^+)$ is a semigroup.

Towards a contradiction, suppose $M'(C^+)$ were infinite.
Since the rows of~$P$ are linearly independent, it follows that $M'(C^+) P$ is infinite, thus $P M(C^+)$ is infinite.
Since $\im w_0 = \im P$, there is a matrix $B \in \Q^{n \times r}$ with $M(w_0) = B P$.
Since the columns of~$B$ are linearly independent, the set $B P M(C^+)$ is infinite.
But this set equals $M(w_0 C^+)$, contradicting the finiteness of~$M(\Sigma^*)$.
Thus the semigroup $M'(C^+)$ is finite.
As $M'(C^+) \subseteq \GL(r,\Q)$, it follows that $M'(C^+)$ is a finite group.
\end{proof}

The following lemma allows us, loosely speaking, to limit the number of cycles in a word.

\begin{ourlemma} \label{lem-cycle-group}
Let $w_0, w_1, \ldots, w_k \in \Sigma^+$ such that $w_1, \ldots, w_k$ are cycles around~$\im w_0$.
Then there exist $\ell \le g(n) - 1$ and $\{u_1, \ldots, u_\ell\} \subseteq \{w_1, \ldots, w_k\}$ such that $M(w_0 w_1 \cdots w_k) = M(w_0 u_1 \cdots u_\ell)$.
\end{ourlemma}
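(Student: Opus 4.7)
The plan is to reduce the shortening of the word \(w_0 w_1 \cdots w_k\) to a pigeonhole argument inside the finite group \(M'(C^+)\) provided by \autoref{lem-cycle}. The fact that each \(w_i\) is a cycle around \(\im w_0\) is exactly what lets us push all the \(M(w_i)\) across a ``projector'' \(P\) and work in \(\GL(r,\Q)\) instead of with the rank-deficient matrices in \(\Q^{n\times n}\).

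Concretely, I would set \(C = \{w_1,\ldots,w_k\}\) and pick, via \autoref{lem-cycle}, a matrix \(P \in \Q^{r\times n}\) with \(\im P = \im w_0\) together with invertible matrices \(M'(w_i) \in \GL(r,\Q)\) satisfying \(P M(w_i) = M'(w_i) P\) for every \(i\). The lemma also ensures that \(M'(C^+)\) is a finite subgroup of \(\GL(r,\Q)\), so \(|M'(C^+)| \le g(r) \le g(n)\). Because \(\im w_0 = \im P\), each row of \(M(w_0)\) is a linear combination of rows of \(P\), so there exists \(B \in \Q^{n\times r}\) with \(M(w_0) = B P\). Iterating the commutation relation then yields
\[
 M(w_0 w_1 \cdots w_k) \;=\; B\, M'(w_1) \cdots M'(w_k)\, P.
\]

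The remaining step is to shorten the group product \(M'(w_1) \cdots M'(w_k)\) to length at most \(g(n)-1\) using factors taken from \(\{M'(w_1),\ldots,M'(w_k)\}\). I would let \(H = M'(C^+) \cup \{I_r\}\), a finite group of order at most \(g(n)\), and consider the partial products \(p_i = M'(w_1) \cdots M'(w_i) \in H\) for \(i = 0, 1, \ldots, k\). Whenever the current length exceeds \(g(n)-1\), pigeonhole produces indices \(0 \le i < j \le k\) with \(p_i = p_j\); then \(M'(w_{i+1}) \cdots M'(w_j) = I_r\), and we may delete the block \(w_{i+1},\ldots,w_j\) from the sequence without changing the value of the group product. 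Iterating this deletion produces a subsequence \(u_1,\ldots,u_\ell\) of \(w_1,\ldots,w_k\) with \(\ell \le g(n)-1\) and \(M'(u_1)\cdots M'(u_\ell) = M'(w_1)\cdots M'(w_k)\).

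Substituting back through the identity displayed above gives
\[
 M(w_0 u_1 \cdots u_\ell) = B\, M'(u_1)\cdots M'(u_\ell)\, P = B\, M'(w_1)\cdots M'(w_k)\, P = M(w_0 w_1 \cdots w_k),
\]
which is the desired equality. I do not expect any genuine obstacle here: the only mildly delicate point is to perform the shortening inside \(\GL(r,\Q)\) rather than attempting to manipulate \(M(w_1),\ldots,M(w_k)\) directly in \(\Q^{n\times n}\), and \autoref{lem-cycle} has been set up precisely to make this transfer available. The bound \(g(n)\) then enters exactly once, via the size of the group \(H\).
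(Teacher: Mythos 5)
Your proof is correct and follows essentially the same route as the paper: both transfer the problem via $P$ and $B$ to the finite group $M'(C^+) \subseteq \GL(r,\Q)$ and then shorten the group product to length at most $g(n)-1$. The only cosmetic difference is that you carry out the pigeonhole deletion argument inline, whereas the paper delegates it to \autoref{rem-group-diameter}, which is proved by exactly that pigeonhole reasoning.
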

\begin{proof}
We can assume $k \ge 1$.
Let $C = \{w_1, \ldots, w_k\}$.
Let $P$ and $M'(w)$ for $w \in C$ as in \autoref{lem-cycle}.
By \autoref{lem-cycle}, the set~$M'(C^+)$ is a finite subgroup of~$\GL(r,\Q)$, so we have $|M'(C^+)| \le g(r) \le g(n)$.
By \autoref{rem-group-diameter}, there are $\ell \le g(n) - 1$ and $u_1, \ldots, u_\ell \in C$ such that $M'(w_1) \cdots M'(w_k) = M'(u_1) \cdots M'(u_\ell)$.
Since $\im w_0 = \im P$, there is a matrix $B \in \Q^{n \times r}$ with $M(w_0) = B P$.
Hence we have $M(w_0 w_1 \cdots w_k) = B P M(w_1) \cdots M(w_k) = B M'(w_1) \cdots M'(w_k) P = B M'(u_1) \cdots M'(u_\ell) P = B P M(u_1) \cdots M(u_\ell) = M(w_0 u_1 \cdots u_\ell)$.
\end{proof}

The following lemma allows us to add cycles to a word.

\begin{ourlemma} \label{lem-cycle-identity}
Let $w \in \Sigma^+$ be a cycle in~$G$.
Then there exists $\rho(w) \in \N \setminus\{0\}$ such that $M(w_0) = M(w_0 w^{\rho(w)})$ holds for all $w_0 \in \Sigma^+$ with $\im w_0 = \im w$.
\end{ourlemma}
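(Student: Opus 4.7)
The plan is to apply \autoref{lem-cycle} with $w$ itself playing the role of~$w_0$: since $\im w = \im w$, the cycle~$w$ is trivially around $\im w$, so choosing any $P \in \Q^{r \times n}$ with $\im P = \im w$ yields an invertible matrix $M'(w) \in \GL(r,\Q)$ with $P M(w) = M'(w) P$. An easy induction on~$k$ then gives $P M(w)^k = M'(w)^k P$ for every $k \ge 1$.

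Next, I would invoke the second part of \autoref{lem-cycle} with $C := \{w\}$: the cyclic semigroup $\{M'(w)^k : k \ge 1\}$ is a finite subgroup of~$\GL(r,\Q)$. Consequently $M'(w)$ has finite multiplicative order, and I take $\rho(w)$ to be exactly this order, so that $M'(w)^{\rho(w)} = I_r$ and hence $P M(w)^{\rho(w)} = P$.

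Finally, I would consider an arbitrary $w_0 \in \Sigma^+$ with $\im w_0 = \im w = \im P$. Because every row of $M(w_0)$ lies in $\im P$, there exists some $B \in \Q^{n \times r}$ with $M(w_0) = B P$. Substituting gives
\[
 M(w_0 w^{\rho(w)}) \;=\; M(w_0)\, M(w)^{\rho(w)} \;=\; B\, P\, M(w)^{\rho(w)} \;=\; B P \;=\; M(w_0),
\]
as required. There is no real obstacle here: the whole argument is a direct transport of the identity $P M(w)^{\rho(w)} = P$, obtained at the level of the $r \times r$ quotient $M'$, back to the full $n \times n$ matrices via the factorisation $M(w_0) = B P$. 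The essential input is the finiteness of $M(\Sigma^*)$, which through \autoref{lem-cycle} forces $M'(w)$ to have finite order.
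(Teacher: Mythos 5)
Your proposal is correct and follows essentially the same route as the paper: invoke Lemma~\ref{lem-cycle} with $\im P = \im w$ (taking $C = \{w\}$) to get $M'(w)$ of finite order, set $\rho(w)$ to that order, and transport $P M(w)^{\rho(w)} = P$ back to $M(w_0)$ via the factorisation $M(w_0) = BP$. The only cosmetic difference is that you explicitly identify $w$ itself as the ``$w_0$'' when invoking Lemma~\ref{lem-cycle}, whereas the paper simply picks $P$ with $\im P = \im w$ directly; the substance is the same.
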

\begin{proof}
Let $P \in \Q^{r \times n}$ be a matrix with $\im P = \im w$.
By \autoref{lem-cycle}, there exists $M'(w) \in \GL(r,\Q)$ such that $P M(w) = M'(w) P$ and $\{M'(w)^i : i \in \N\}$ is a finite group.
Define~$\rho(w)$ to be the order of this group, i.e., $M'(w)^{\rho(w)} = I_r$.
Let $w_0 \in \Sigma^+$ with $\im w_0 = \im w$.
Since $\im w_0 = \im P$, there is a matrix $B \in \Q^{n \times r}$ with $M(w_0) = B P$.
Hence $M(w_0) = B P = B I_r P = B M'(w)^{\rho(w)} P = B P M(w)^{\rho(w)} = M(w_0) M(w)^{\rho(w)} = M(w_0 w^{\rho(w)})$.
\end{proof}

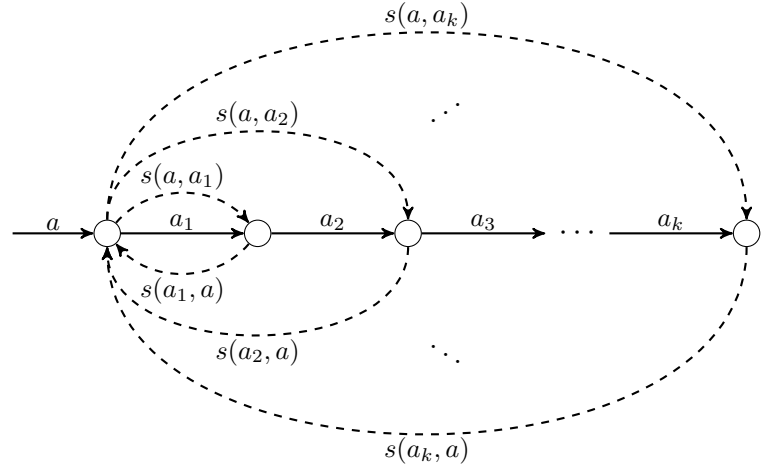
\begin{figure*}[t]
  \begin{center}
    \begin{tikzpicture}[LMC style,initial text = {},inner sep=1]

      \node (init) at (-1.3,-1) {};
      \node[state] (a) at (0,-1) {};
      \node[state] (a1) at (2,-1) {};
      \node[state] (a2) at (4,-1) {};
      \node[yscale=-1] (b3) at (4.5,+0.5) {\Large $\ \ddots\ $};
      \node (a3) at (6.25,-1) {\Large $\ \cdots\ $};
      \node (c3) at (4.5,-2.5) {\Large $\ \ddots\ $};
      \node[state] (ak) at (8.5,-1) {}; 
      
      \path[->] (init) edge node[above] {$a$} (a);

      \path[->] (a) edge node[above] {$a_1$} (a1);
      \path[->, bend left=50, dashed] (a) edge node[above] {$s(a,a_1)$} (a1);
      \path[->, bend left=50, dashed] (a1) edge node[below] {$s(a_1,a)$} (a);

      \path[->] (a1) edge node[above] {$a_2$} (a2);
      \path[->, bend left=90, dashed] (a) edge node[above] {$s(a,a_2)$} (a2);
      \path[->, bend left=90, dashed] (a2) edge node[below] {$s(a_2,a)$} (a);

      \path[->] (a2) edge node[above] {$a_3$} (a3);

      \path[->] (a3) edge node[above] {$a_k$} (ak);      
      \path[->, bend left=90, dashed] (a) edge node[above] {$s(a,a_k)$} (ak);
      \path[->, bend left=90, dashed] (ak) edge node[below] {$s(a_k,a)$} (a);
    \end{tikzpicture}
  \end{center}
  \caption{Illustration of the paths $w$ and $w'$ in
    \autoref{lem-modify-within-SCC}. Edges are depicted as solid
    arrows, paths as dashed arrows.}
\label{fig:modify-within-SCC}
\end{figure*}

The following lemma allows us to limit the length of paths within an SCC.

\begin{ourlemma} \label{lem-modify-within-SCC}
Let $a \in \Sigma$, and let $w \in \Sigma^*$ be a path in~$G$ from~$\im a$ such that $\im a$ and $\im w$ are in the same SCC.
Then there exists $u \in \Sigma^*$ with $M(a w) = M(a u)$ and
\[
 |u| \ \le \ 2^{n+2} g(n) - 2 \ \in \ 2^{O(n \log n)}\,.
\]
\end{ourlemma}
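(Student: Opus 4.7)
The plan is to combine \autoref{lem-diameter-of-SCC}, \autoref{lem-cycle-identity} and \autoref{lem-cycle-group}. Write $w = a_1 \cdots a_k$ and let $V_i := \im(a\, a_1 \cdots a_i)$, so $V_0 = \im a$ and $V_k = \im(aw)$. Since $V_0$ and $V_k$ lie in the same SCC of~$G$ and the $V_i$ form a path from $V_0$ to $V_k$, every intermediate $V_i$ also lies in that SCC. By \autoref{lem-diameter-of-SCC}, for each $i \in \{1, \ldots, k\}$ I fix shortest paths $s(a, a_i)$ from $V_0$ to $V_i$ and $s(a_i, a)$ from $V_i$ to $V_0$ in~$G$, each of length at most $\binom{n}{r} \le 2^n$.

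First, I would use \autoref{lem-cycle-identity} to insert cancellable cycles after every letter of~$w$. For each~$i$, the word $\delta_i := s(a_i, a)\, s(a, a_i)$ is a cycle at~$V_i$ with $\im \delta_i = V_i = \im(a\, a_1 \cdots a_i)$, so there is $\rho_i \ge 1$ with $M(a\, a_1 \cdots a_i) = M(a\, a_1 \cdots a_i\, \delta_i^{\rho_i})$. Doing this for $i = 1, 2, \ldots, k$ in succession yields
\[
  M(aw) \ = \ M\bigl(a \cdot a_1 \delta_1^{\rho_1} \cdot a_2 \delta_2^{\rho_2} \cdots a_k \delta_k^{\rho_k}\bigr).
\]

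Second, I would regroup this word into a product of short cycles \emph{around~$V_0$} followed by a short tail. Setting $X_i = s(a, a_i)$ and $Y_i = s(a_i, a)$, the commutation $(Y_i X_i)^{\rho_i} = Y_i (X_i Y_i)^{\rho_i - 1} X_i$ gives $\delta_i^{\rho_i} = s(a_i, a)\, \epsilon_i^{\rho_i - 1}\, s(a, a_i)$, where $\epsilon_i := s(a, a_i)\, s(a_i, a)$. Threading this identity through the whole word telescopes it into
\[
  a \cdot \alpha_1 \epsilon_1^{\rho_1 - 1} \alpha_2 \epsilon_2^{\rho_2 - 1} \cdots \alpha_k \epsilon_k^{\rho_k - 1} \cdot s(a, a_k),
\]
where $\alpha_1 := a_1\, s(a_1, a)$ and $\alpha_i := s(a, a_{i-1})\, a_i\, s(a_i, a)$ for $i \ge 2$. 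A direct check shows that each $\alpha_i$ and each $\epsilon_i$ is a cycle around~$V_0$ of length at most $2\binom{n}{r} + 1$.

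Finally, I would apply \autoref{lem-cycle-group} with $w_0 = a$ and the multiset of cycles around~$V_0$ obtained by expanding each $\epsilon_i^{\rho_i - 1}$ into $\rho_i - 1$ copies of~$\epsilon_i$: this produces a sub-selection $u'_1, \ldots, u'_\ell$ with $\ell \le g(n) - 1$, each $u'_j \in \{\alpha_1, \ldots, \alpha_k, \epsilon_1, \ldots, \epsilon_k\}$, such that $M(a \cdot \alpha_1 \epsilon_1^{\rho_1 - 1} \cdots \alpha_k \epsilon_k^{\rho_k - 1}) = M(a\, u'_1 \cdots u'_\ell)$. Taking $u := u'_1 \cdots u'_\ell \, s(a, a_k)$ then gives $M(aw) = M(au)$ with
\[
  |u| \ \le \ (g(n) - 1)\bigl(2\binom{n}{r} + 1\bigr) + \binom{n}{r} \ \le \ 2^{n+2} g(n) - 2 .
\]
The main technical hurdle will be the regrouping in the second step: one must check both that the precondition $\im(a\, a_1 \cdots a_i) = V_i$ of \autoref{lem-cycle-identity} is satisfied (so the insertions are legal) and that, after the commutation identity, the resulting factors $\alpha_i$ and~$\epsilon_i$ really are cycles \emph{at~$V_0$}, so that \autoref{lem-cycle-group} can be applied to the entire product in one shot.
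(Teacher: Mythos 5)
Your proposal is correct and follows essentially the same route as the paper's proof: insert cancellable cycles $\delta_i = s(a_i,a)\,s(a,a_i)$ after each letter via Lemma~\ref{lem-cycle-identity}, regroup into cycles around the base vertex $\im a$ using the commutation $(YX)^\rho = Y(XY)^{\rho-1}X$, and then compress via Lemma~\ref{lem-cycle-group}, with Lemma~\ref{lem-diameter-of-SCC} controlling the lengths of the shortest-path blocks. The only differences are notational (your $\delta_i,\epsilon_i,\alpha_i$ correspond to the paper's $w_i,v_i$ and the elements of the set~$C$), and your final arithmetic step relaxes the bound slightly differently while arriving at the same $2^{n+2}g(n)-2$.
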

\begin{proof}
For any $b_1, b_2 \in \Sigma$ such that $\im b_1, \im b_2$ are in the SCC of~$\im a$, let $s(b_1, b_2) \in \Sigma^*$ be a shortest path from $\im b_1$ to~$\im b_2$.
By \autoref{lem-diameter-of-SCC}, we have $|s(b_1, b_2)| \le \binom n r$.

Suppose $w = a_1 \cdots a_k$ for $a_i \in \Sigma$.
For $i \in \{1, \ldots, k\}$ define the cycle $w_i := s(a_i,a) s(a,a_i)$ around $\im a_i$.
By \autoref{lem-cycle-identity}, we have $M(a w) = M(a w')$ for
\begin{align*}
 w' \ := \ & a_1 w_1^{\rho(w_1)} a_2 w_2^{\rho(w_2)} \cdots a_k w_k^{\rho(w_k)} \,.
\intertext{For $i \in \{1, \ldots, k\}$ also define the cycle $v_i := s(a,a_i) s(a_i,a)$ around $\im a$.
Then we have:}
 w' \ =  \ & a_1 s(a_1,a) v_1^{\rho(w_1)-1} s(a,a_1) a_2 s(a_2,a) v_2^{\rho(w_2)-1} s(a,a_2) \cdots a_k s(a_k,a) v_k^{\rho(w_k)-1} s(a,a_k)
\end{align*}
\autoref{fig:modify-within-SCC} illustrates the paths $w$ and $w'$.
Define a set of cycles $C \subseteq \Sigma^*$ around $\im a$ by
 \[ C \ := \ \{a_1 s(a_1,a), v_1, s(a,a_1) a_2 s(a_2,a), v_2, \ldots, s(a,a_{k-1}) a_k s(a_k,a), v_k\}\,.
 \]
Since $w' \in C^* s(a,a_k)$, by \autoref{lem-cycle-group}, there exist $\ell \le g(n)-1$ and $u_1, \ldots, u_\ell \in C$ such that $M(a w) = M(a w') = M(a u_1 u_2 \cdots u_\ell s(a,a_k))$.
For all $v \in C$ we have $|v| \le 2 \binom n r + 1 \le 2^{n+2}$, and $|s(a,a_k)| \le \binom n r \le 2^n$.
Hence the lemma holds for $u := u_1 u_2 \cdots u_\ell s(a,a_k)$, as $|u| \le 2^{n+2}(g(n)-1) + 2^n \le 2^{n+2} g(n) - 2$.
\end{proof}

We are ready to prove \autoref{prop-max-rank}.
\begin{proof}[Proof of \autoref{prop-max-rank}]
Decompose the word~$w$ into $w = a_1 w_1 a_2 w_2 \cdots a_k w_k$ for $a_i \in \Sigma$ so that for all $i \in \{1, \ldots, k\}$ the vertices $\im a_i, \im w_i$ are in the same SCC, and for all $i \in \{1, \ldots, k-1\}$ the vertices $\im w_i, \im a_{i+1}$ are in different SCCs.
By \autoref{lem-number-of-SCCs}, we have $k \le 2 \binom n r \le 2^{n+1}$.
For all $i \in \{1, \ldots, k\}$, by \autoref{lem-modify-within-SCC}, there is $u_i \in \Sigma^*$ with $|u_i| \le 2^{n+2} g(n) - 2$ such that $M(a_i w_i) = M(a_i u_i)$.
Hence the proposition holds for $u := a_1 u_1 a_2 u_2 \cdots a_k u_k$, as
$|u| \le 2^{n+1} (2^{n+2} g(n) - 2 + 1) \le 2^{2 n + 3} - 1$.
\end{proof}

\subsection{The General Case} \label{sub-general-case}

In this subsection we prove \autoref{thm-main}.
For $r \in \{0, \ldots, n\}$ let $d_r \in \N$ be the smallest number such that for any $w \in \Sigma^*$ with $\rk w \ge r$ there is $u \in \Sigma^*$ with $M(w) = M(u)$ and $|u| \le d_r$.
Also write $h$ for the bound from \autoref{prop-max-rank}.

\begin{ourproposition} \label{prop-induction-step}
For any $r \in \{0, \ldots, n-1\}$ we have $d_r \le d_{r+1} + (d_{r+1} + 1) h$.
\end{ourproposition}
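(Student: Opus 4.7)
I would proceed by case analysis on $\rk w$. If $\rk w \ge r+1$, the definition of $d_{r+1}$ gives $u\in\Sigma^*$ with $M(u)=M(w)$ and $|u|\le d_{r+1}\le d_{r+1}+(d_{r+1}+1)h$, so only the case $\rk w = r$ is non-trivial.

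Assume $\rk w = r$ and write $w = a_1\cdots a_k$. Because prefix ranks are monotonically non-increasing along a word, there is a smallest index $j$ with $\rk(a_1\cdots a_j)=r$; set $w_p := a_1\cdots a_{j-1}$, $b := a_j$, and $w_s := a_{j+1}\cdots a_k$. Either $w_p=\varepsilon$ or $\rk w_p \ge r+1$, and every prefix $w_p\,b\,w_s'$ of $w$ (for $w_s'$ a possibly empty prefix of $w_s$) has rank exactly~$r$. The induction hypothesis applied to $w_p$ yields $u_p\in\Sigma^*$ with $M(u_p)=M(w_p)$ and $|u_p|\le d_{r+1}$, so $M(w) = M(u_p\,b\,w_s)$. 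The remaining task is to find $v\in\Sigma^*$ with $M(u_p\cdot v)=M(u_p\,b\,w_s)$ and $|v|\le (d_{r+1}+1)h$, so that the total length is at most $d_{r+1}+(d_{r+1}+1)h$.

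To construct $v$ I would adapt the graph-theoretic analysis of \autoref{sub-maximum-rank-case}. Setting $V := \im(u_p b) = \im(w_p b)$, which is an $r$-dimensional subspace, every letter of $w_s$ acts on the evolving $r$-dimensional image while preserving dimension. This parallels the setting of \autoref{prop-max-rank} except that individual letters of~$\Sigma$ may have rank strictly greater than~$r$. I would consider the graph $G'$ whose vertices are the $r$-dimensional subspaces reachable from $V$ by right multiplication, with edges $(W_1,a,W_2)$ whenever $W_1\cap\ker a = \triv$ and $W_1\,M(a) = W_2$. The analogs of \autoref{lem-number-of-SCCs}, \autoref{lem-diameter-of-SCC}, \autoref{lem-cycle}, \autoref{lem-cycle-group}, \autoref{lem-cycle-identity}, and \autoref{lem-modify-within-SCC} should carry over: each proof rests on wedge-product identities between images and kernels of subwords rather than on the individual ranks of the letters labelling edges, so the same SCC and cycle-group bounds apply.

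The main obstacle I anticipate is carefully transferring the cycle-and-group arguments to the new graph~$G'$. In \autoref{lem-cycle}, a cycle $w$ around $\im w_0$ satisfies $\im w = \im w_0$; in the general case one has only the weaker identity $V\cdot M(w)=V$ for a cycle $w$ at~$V$, and the extracted invertible matrix $M'(w)\in\GL(r,\Q)$ must be defined in terms of a fixed $r$-dimensional subspace~$V$ rather than in terms of the image of a single rank-$r$ word. Once these adaptations are in place, one obtains a bound on $|v|$ of order~$h$, which is comfortably within $(d_{r+1}+1)h$ since $d_{r+1},h\ge 0$, and the induction step closes.
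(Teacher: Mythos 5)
Your plan differs from the paper's, and the gap lies in the claim that the lemmas of \autoref{sub-maximum-rank-case} ``carry over'' to your graph $G'$ over the original alphabet $\Sigma$. The paper instead decomposes $w = w_0 a_1 w_1 \cdots a_k w_k$ with $\rk w_i > r$ and $\rk(a_i w_i)=r$, compresses each $w_i$ to $u_i$ with $|u_i|\le d_{r+1}$ via the definition of $d_{r+1}$, and then \emph{re-alphabetizes}: a fresh alphabet $\Sigma_r$ is introduced whose letters represent the matrices $M(a_i u_i)$, each of rank \emph{exactly} $r$, and \autoref{prop-max-rank} is applied to a word over $\Sigma_r$. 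The factor $(d_{r+1}+1)$ in the bound is precisely the cost of re-expanding a $\Sigma_r$-letter to the $\Sigma$-word $a_i u_i$ of length at most $d_{r+1}+1$. Your remark that $|v|$ should be ``of order $h$, comfortably within $(d_{r+1}+1)h$'' is a warning sign: that factor is not slack but the cost of the re-alphabetization you are trying to avoid.

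The concrete step that fails in $G'$ is the determinism fact that the paper highlights after defining $G$: when every generator has rank $r$, an edge labelled $a$ must terminate at $\im a$, since $V M(a)\subseteq \im a$ and both have dimension $r$. This is exactly what the proofs of \autoref{lem-number-of-SCCs} and \autoref{lem-diameter-of-SCC} use to convert non-reachability into the kernel-intersection statements $\im w_i\cap\ker w_j\ne\triv$ and $\im a_i\cap\ker a_j\ne\triv$: if the intersection were trivial, the path labelled $w_j$ (resp.\ edge labelled $a_j$) would exist \emph{and land on the vertex $\im w_j$ (resp.\ $\im a_j$)}, contradicting non-reachability (resp.\ minimality of the shortest path). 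In your $G'$, a letter $a$ with $\rk a>r$ sends $W$ to $W M(a)\subsetneq\im a$, which depends on $W$; the edge target is no longer determined by the label, so triviality of the intersection no longer produces the forbidden edge to $\im a_j$ (or $\im w_j$), and the implication collapses. A secondary issue: the counting bounds $\dim\spa{\iota(\ker w_j):j}\le\dim\Lambda^{n-r}\Q^n=\binom nr$ require every $\ker w_j$ to have dimension exactly $n-r$; with higher-rank letters the kernels have smaller dimension and their $\iota$-images live in different graded pieces of $\Lambda\Q^n$. Both problems disappear once one passes to $\Sigma_r$, which is the missing idea.
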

\begin{proof}
Let $w \in \Sigma^*$ with $\rk w \ge r$.
We need to show that there is $u \in \Sigma^*$ with $M(w) = M(u)$ and $|u| \le d_{r+1} + (d_{r+1} + 1) h$.
Decompose~$w$ into $w = w_0 a_1 w_1 a_2 w_2 \cdots a_k w_k$ for $a_i \in \Sigma$ such that $\rk w_0 > r$ and for all $i \in \{1, \ldots, k\}$ we have $\rk (a_i w_i) = r$ and $\rk w_i > r$.
(This decomposition is unique; in particular, $a_k w_k$ is the shortest suffix of~$w$ with rank~$r$.)
By the definition of~$d_{r+1}$, for all $i \in \{0, \ldots, k\}$ there exists $u_i \in \Sigma^*$ with $M(w_i) = M(u_i)$ and $|u_i| \le d_{r+1}$.
Then $M(w) = M(u_0 a_1 u_1 a_2 u_2 \cdots a_k u_k)$.

Define a new alphabet $\Sigma_r$ and a monoid morphism $M_r :
\Sigma_r^* \to \Q^{n \times n}$ with $M_r(\Sigma_r) = \{M(a_i u_i) : i
\in \{1, \ldots, k\}\}$, and note that $\rk M_r(b)=r$ for all $b\in \Sigma_r$.
Then there is a word $y \in \Sigma_r^*$ such that $M_r(y) = M(a_1 u_1 \cdots a_k u_k)$.
By \autoref{prop-max-rank}, there is $x \in \Sigma_r^*$ with $M_r(y) = M_r(x)$ and $|x| \le h$.
Obtain the word $v \in \Sigma^*$ from~$x$ by replacing each letter $b \in \Sigma_r$ in~$x$ by~$a_i u_i$ for $i\in \{1, \ldots, k\}$ such that $M_r(b) = M(a_i u_i)$.
Then $M_r(x) = M(v)$, and thus $M(w) = M(u_0 a_1 u_1 \cdots a_k u_k) = M(u_0) M_r(y) = M(u_0) M_r(x) = M(u_0) M(v) = M(u_0 v)$, where $|u_0 v| = |u_0| + |v| \le d_{r+1} + (d_{r+1} + 1) |x| \le d_{r+1} + (d_{r+1} + 1) h$.
\end{proof}

We can now prove our main result.

\begin{proof}[Proof of \autoref{thm-main}]
We prove by induction that for all $r \in \{0, \ldots, n\}$ we have $d_r \le (h+1)^{n-r} d_n + (h+1)^{n-r} - 1$.
For the base case, $r=n$, this is trivial.
For the step, let $r < n$.
We have:
\begin{align*}
d_r \
&\le \ h + (h+1) d_{r+1} && \text{(\autoref{prop-induction-step})} \\
&\le \ h + (h+1) \left( (h+1)^{n-r-1} d_n + (h+1)^{n-r-1} - 1 \right) && \text{(induction hypothesis)} \\
&=   \ h + (h+1)^{n-r} d_n + (h+1)^{n-r} - h - 1
\end{align*}
This completes the induction proof.
Hence $d_0 \le (h+1)^n (d_n + 1) = 2^{n(2 n + 3)} g(n)^n (d_n + 1)$.
The rank-$n$ matrices in $M(\Sigma)$ generate a finite subgroup of~$\GL(n,\Q)$.
So it follows by \autoref{rem-group-diameter} that $d_n + 1 \le g(n)$.
Thus $d_0 \le \ourbound$.
\end{proof}

\section{Algorithmic Applications}\label{sec-applications}

\autoref{thm-main-intro} gives an exponential-space algorithm for
deciding finiteness of a finitely generated rational matrix
semigroup. In fact, the following theorem shows that 
deciding finiteness is in the second level of the weak EXP hierarchy
(see e.g.~\cite{Hemachandra89} for a definition).
\begin{ourtheorem}\label{thm-complexity}
  Given be a finite set $\M \subseteq \Q^{n\times n}$ of rational
  matrices, the problem of deciding finiteness of the generated
  semigroup $\overline{\M}$ is in $\textup{coNEXP}^{\textup{NP}}$.
\end{ourtheorem}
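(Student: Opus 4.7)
The plan is to prove the equivalent containment that \emph{infiniteness} of $\mon{\M}$ lies in $\textup{NEXP}^{\textup{NP}}$, using the length bound of \autoref{thm-main-intro} to reduce the decision problem to a bounded-alternation search. Write $B := \ourbound \in 2^{O(n^2 \log n)}$, which is exponential in the input size.

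The first step will be to establish the characterization
\[
  \mon{\M} \text{ is finite}
  \quad\Longleftrightarrow\quad
  \forall\, w \in \Sigma^{\le B+1}\ \exists\, u \in \Sigma^{\le B}\colon M(w) = M(u).
\]
The $(\Rightarrow)$ direction is immediate from \autoref{thm-main-intro}. For $(\Leftarrow)$, I would argue by strong induction on $|w|$ that every word admits an equivalent $u \in \Sigma^{\le B}$: the base $|w| \le B$ is trivial, and when $|w| > B$ one splits $w = w' a$ with $a \in \Sigma$, shortens $w'$ to some $u' \in \Sigma^{\le B}$ via the induction hypothesis, and then applies the right-hand side of the characterization to $u' a$ (which has length at most $B+1$). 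Since there are only finitely many words of length $\le B$, this also shows $\mon{\M}$ is finite.

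Taking the contrapositive, infiniteness is witnessed by $\exists\, w \in \Sigma^{\le B+1}\ \forall\, u \in \Sigma^{\le B}\colon M(w) \ne M(u)$, a $\Sigma_2$-style alternation where both quantifiers range over exponentially long words. The corresponding $\textup{NEXP}^{\textup{NP}}$ algorithm is then direct: the exponential-time nondeterministic machine guesses $w$ of length at most $B+1$, computes $M(w)$ by iterated matrix multiplication, and queries an $\textup{NP}$ oracle for the existential question ``does some $u \in \Sigma^{\le B}$ satisfy $M(u) = M(w)$?'', accepting exactly when the oracle rejects. The oracle, on a query of exponential size, need only guess such a $u$ (of length polynomial in the query size) and verify the matrix equation.

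The main obstacle beyond the characterization itself is a technical observation needed to justify the resource bounds: after $O(B)$ matrix multiplications, the entries of $M(w)$ and $M(u)$ retain bit-length \emph{singly} exponential in $n$, because placing the matrices over a common denominator shows each multiplication contributes only polynomially many new bits. This ensures that $M(w)$ can be computed in exponential time by the main machine and that the oracle's verification is polynomial in its (exponential) input, so that the whole procedure fits inside $\textup{NEXP}^{\textup{NP}}$; the rest of the argument is routine resource accounting.
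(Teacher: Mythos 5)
Your proof is correct and takes essentially the same approach as the paper's: guess a product of length $B+1$ where $B$ is the bound of \autoref{thm-main-intro}, then use an NP oracle to check for a shorter equivalent product, accepting iff none exists. You are somewhat more thorough than the paper, which states only the ``finite $\Rightarrow$ shortenable'' direction of correctness and omits the bit-length accounting; your explicit induction for the converse direction (that universal shortenability of length-$(B+1)$ words forces finiteness) and your observation that entries stay singly exponential in size are both needed and both correct.
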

\begin{proof}
  For a $\textup{NEXP}^{\textup{NP}}$ algorithm deciding
  infiniteness, non-deterministically guess in exponential time some
  $M=M_1\cdots M_\ell$, $M_i\in \M$, with
  $\ell=2^{n(2n+3)}g(n)^{n+1}+1$ as a witness for infiniteness. Then,
  using a call to an NP oracle, check whether there are
  $M_1',\ldots,M_r' \in \M$ such that $M=M_1'\cdots M_r'$ for some
  $0\le r<\ell$. If the call is successful then reject, otherwise
  accept.

  Correctness of the algorithm immediately follows from
  \autoref{thm-main-intro}: if $\overline{\M}$ is finite, then the
  $M_1',\ldots,M_r'\in \M$ such that $M=M_1'\cdots M_r'$
  are guaranteed to exist.
\end{proof}
This is the first improvement of the non-elementary algorithm of
Mandel and Simon~\cite{MandelSimon77}.

Another immediate consequence of \autoref{thm-main-intro} is an
upper bound on the complexity of the membership problem for
finite matrix semigroups:
\begin{ourtheorem}
  Given a finite set of rational matrices $\M\subseteq \Q^{n\times n}$
  such that $\overline{\M}$ is finite and $A \in \Q^{n\times n}$, the
  problem of deciding whether $A\in \overline{\M}$ is in
  \textup{NEXP}.
\end{ourtheorem}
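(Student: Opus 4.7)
The plan is to reduce the problem directly to the length bound of \autoref{thm-main-intro}: if $A \in \overline{\M}$, then $A$ is the product of at most $\ourbound = 2^{O(n^2 \log n)}$ matrices from~$\M$, which is a witness of exponential size. So the $\textup{NEXP}$ algorithm is: on input $(\M, A)$, nondeterministically guess a length $\ell \le \ourbound$ and a sequence $M_1, \ldots, M_\ell \in \M$, compute the product $M_1 \cdots M_\ell$, and accept iff it equals~$A$. Completeness follows from \autoref{thm-main-intro}, while soundness is trivial, since any such product lies in~$\overline{\M}$.

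The main point to verify is that the guess-and-verify step fits within nondeterministic exponential time. The length~$\ell$ is $2^{O(n^2 \log n)}$, so writing down the guess takes exponential time. The more delicate issue is the bit complexity of the intermediate products. If entries of matrices in~$\M$ have numerators and denominators of bit length at most~$b$, then a product of two such rationals has bit length at most $2b$, a sum of $n$ such products has bit length at most $2b + O(\log n)$, and so by induction the entries of $M_1 \cdots M_i$ have bit length bounded by $2^{i} \cdot (b + O(\log n))$ in the naive bound, but in fact one can keep the entries reduced and obtain the sharper bound $O(i \cdot (b + \log n))$ after $i$ multiplications (each factor contributes additively to numerator and denominator bit lengths). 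Either bound is of the form $2^{\mathrm{poly}(n)}$ for $i \le \ell$, so each multiplication takes exponential time, and the total running time remains exponential.

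The hard part, if anything, is simply the bit-length bookkeeping above; the algorithmic content is otherwise immediate from \autoref{thm-main-intro}. Once the length bound is in hand, membership reduces to guessing a short enough certificate and performing an exponential-time deterministic verification, placing the problem in~$\textup{NEXP}$.
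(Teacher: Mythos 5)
Your approach is exactly the one the paper intends (the theorem is stated without proof as an "immediate consequence" of Theorem 1): guess a witness product of length at most $\ourbound$ and verify it in deterministic exponential time, after checking that intermediate products have only exponentially many bits. Your bit-length bookkeeping is a bit loose — reducing fractions does not obviously give the stated $O(i(b+\log n))$ bound; the cleaner route is to clear denominators once, writing $M_i = N_i/d_i$ with $N_i$ integer and $d_i$ the LCM of the denominators (so $d_i \le 2^{n^2 b}$), whence $M_1\cdots M_\ell = (N_1\cdots N_\ell)/(d_1\cdots d_\ell)$ has numerator and denominator of bit length $O(\ell\, n^2 b)$ — but either way the conclusion (exponential, not doubly exponential, bit length) holds and the argument is sound.
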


In the remainder of this section, we discuss implications of
\autoref{thm-complexity} to decision problems in automata theory.

\subsection{Weighted Automata}
The motivation for Mandel and Simon to study the finiteness problem originated from investigating the decidability of the finiteness problem (originally called boundedness problem in~\cite{MandelSimon77}) for weighted automata.
A \emph{weighted automaton over~$\Q$} is a quintuple $\A = (n, \Sigma, M, \alpha, \eta)$ where $n \in \N$ is the number of states, $\Sigma$ is the finite alphabet, $M : \Sigma \to \Q^{n \times n}$ maps letters to transition matrices, $\alpha \in \Q^n$ is the initial state vector, and $\eta \in \Q^n$ is the final state vector.
We extend~$M$ to the monoid morphism $M : \Sigma^* \to \Q^{n \times n}$ as before.
Such an automaton defines a function $|\A| : \Sigma^* \to \Q$ by defining $|\A|(w) = \alpha M(w) \eta^T$, where the superscript~$T$ denotes transpose.
We say, $\A$ is \emph{finite} if the image of~$|\A|$ is finite, i.e., if $|\A|(\Sigma^*) \subseteq \Q$ is a finite set.
The \emph{finiteness problem} asks whether a given automaton is finite.

It is clear that if $M(\Sigma^*)$ is finite then $\A$ is finite.
The converse is not generally true: e.g., any automaton~$\A$ whose initial state vector is the zero vector satisfies $|\A|(\Sigma^*) = \{0\}$, hence is finite, regardless of $M(\Sigma^*)$.
However, it is argued in the proof of Corollary~5.4 in~\cite{MandelSimon77} that, given an automaton~$\A$, one can compute, in exponential time, a polynomial-size automaton~$\B$ with monoid morphism~$M_\B$ such that (i) $|\A| = |\B|$, and (ii) $\A$ (and hence $\B$) is finite if and only if $M_\B(\Sigma^*)$ is finite.%
\footnote{We remark that this automaton~$\B$ has the minimal number of states among the automata defining the function~$|\A|$.
This minimal automaton goes back to \cite{Schutzenberger61} and has been further studied in, e.g., \cite{CarlylePaz71,Fliess74}.}
Mandel and Simon use this argument to show that the finiteness problem for weighted automata over~$\Q$ is decidable. \autoref{thm-complexity} then immediately gives:
\begin{ourcorollary}
The finiteness problem for weighted automata over~$\Q$ can be decided in $\textup{coNEXP}^{\textup{NP}}$.
\end{ourcorollary}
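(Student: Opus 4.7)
The plan is to reduce the weighted-automaton finiteness problem to the matrix-semigroup finiteness problem and then invoke \autoref{thm-complexity}. First, given a weighted automaton $\A = (n,\Sigma,M,\alpha,\eta)$ as input, I would apply the construction of Mandel and Simon referenced in the paragraph preceding the corollary (their Corollary~5.4), which produces in exponential time a new automaton $\B$ of \emph{polynomial} size whose transition morphism $M_\B : \Sigma^* \to \Q^{n' \times n'}$ has the crucial property that $M_\B(\Sigma^*)$ is finite if and only if $\A$ is finite (as a function $|\A| : \Sigma^* \to \Q$). Thus the question ``Is $\A$ finite?'' is algorithmically equivalent to ``Is the matrix semigroup $\overline{M_\B(\Sigma)}$ finite?''.

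Next I would feed the matrix set $M_\B(\Sigma) \subseteq \Q^{n' \times n'}$ into the decision procedure provided by \autoref{thm-complexity}, obtaining a $\textup{coNEXP}^{\textup{NP}}$ algorithm in the size of $M_\B(\Sigma)$. Since $M_\B(\Sigma)$ has size polynomial in $|\A|$, the overall complexity, measured in the original input size~$|\A|$, is still $\textup{coNEXP}^{\textup{NP}}$: the exponential time needed to compute $\B$ is absorbed into the exponential time budget of the outer algorithm, and the NP oracle call remains of exponential size in $|\A|$.

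There is really no main obstacle here beyond citing the two ingredients correctly: the Mandel--Simon reduction (already established in the literature and invoked explicitly in the surrounding paragraph) and \autoref{thm-complexity} (proved above). The only bookkeeping worth verifying is that the bit-sizes of the rationals in $M_\B$ remain at most exponential in~$|\A|$, which is guaranteed by the construction of~$\B$, and that the guess length $2^{n'(2n'+3)} g(n')^{n'+1} + 1$ used in the proof of \autoref{thm-complexity} remains singly exponential in $|\A|$, which follows from $n' \in \mathit{poly}(|\A|)$ together with the bound $g(n') \le (2n')!$.
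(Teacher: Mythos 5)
Your proposal is correct and takes essentially the same approach as the paper: invoke the Mandel--Simon reduction to a polynomial-size automaton~$\B$ whose transition monoid is finite iff $\A$ is finite, then apply \autoref{thm-complexity} to $M_\B(\Sigma)$, noting that the exponential-time preprocessing and the blowup from $n$ to $n'$ stay within the $\textup{coNEXP}^{\textup{NP}}$ budget. (One minor note: since $\B$ is stated to have polynomial size, the bit-sizes of its rational entries are in fact polynomial in~$|\A|$, not merely exponential as you cautiously allow, but this only strengthens your conclusion.)
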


\subsection{Affine Integer Vector Addition Systems with States}

We show that \autoref{thm-main-intro} together with
\autoref{cor-card-bound} imply an upper bound for the reachability
problem in affine integer vector addition systems with states with the
finite monoid property (\afmp) studied in~\cite{BHM18}. An affine
$\Z$-VASS in dimension $d\in \N$ is a tuple $\V=(d,Q,T)$ such that $Q$
is a finite set of states and $T\subseteq Q\times \Z^{d\times d}
\times \Z^d \times Q$ is a finite transition relation. Setting $\M :=
\{ A\in \Z^{d\times d} : (q,A,\vec b,r)\in T \}$, in \afmp\ we
additionally require that $\overline{\M}$ is finite. A configuration
of $\V$ is a tuple $(q,\vec v) \in Q \times \Z^d$ which we write as
$q(\vec v)$.  We define the step relation ${\rightarrow} \subseteq (Q
\times \Z^d)^2$ such that $q(\vec v) \rightarrow r(\vec w)$ if and
only if there is a transition $(q,A,\vec b,r)\in T$ such that $\vec w
= A\cdot \vec v + \vec b$. Moreover, we denote by $\rightarrow^*$ the
reflexive transitive closure of $\rightarrow$.  For a configuration
$q(\vec v)$, we define the reachability set of $q(\vec v)$ as
$\reach(q(\vec v)) := \{ r(\vec w) : q(\vec v) \rightarrow^* r(\vec w)
\}$. Given configurations $q(\vec v)$ and $r(\vec w)$, reachability is
the problem of deciding whether $r(\vec w) \in \reach(q(\vec
v))$. Note that $\reach(q(\vec v))$ is in general infinite despite
$\overline{\M}$ being finite.

The reachability problem for \afmp\ was shown decidable
in~\cite{BHM18} by a reduction to reachability in $\Z$-VASS. A
$\Z$-VASS is an \afmp\ in which every transition is of the form
$(q,I_d,\vec b,r)$. The reachability problem for $\Z$-VASS is known to
be NP-complete, see e.g.~\cite{HH14}. The size of the $\Z$-VASS
obtained in the reduction given in~\cite{BHM18} grows in $|\mon{\M}|$
and hence leads to a non-elementary upper bound for reachability in
\afmp\ assuming Mandel and Simon's bound. The results of this paper
enable us to significantly improve this upper bound.
\begin{ourcorollary}
  The reachability problem for \afmp\ can be decided in
  $\textup{EXPSPACE}$.
\end{ourcorollary}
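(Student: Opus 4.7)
The plan is to combine the reduction of~\cite{BHM18}, which turns \afmp\ reachability into reachability in a plain $\Z$-VASS, with the improved cardinality bound of \autoref{cor-card-bound}. Fix an input \afmp\ $\V=(d,Q,T)$ together with source and target configurations, let $\M\subseteq \Z^{d\times d}$ be the set of $m$ distinct matrices appearing in~$T$, and let $N$ denote the binary size of the entire input. The BHM18 reduction produces an equivalent $\Z$-VASS $\V'$ whose state set is (essentially) $Q\times \mon{\M}$ and whose per-transition data has polynomial bit-length in~$N$. Plugging in \autoref{cor-card-bound}, which gives $|\mon{\M}|\le m^{2^{O(d^2\log d)}}$, we conclude that the binary description size of~$\V'$ is bounded by $2^{\mathrm{poly}(N)}$, i.e.\ singly exponential in~$N$.

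The problem therefore reduces to $\Z$-VASS reachability on an input of singly exponential binary size. Since $\Z$-VASS reachability is in~NP~\cite{HH14}, a nondeterministic algorithm decides the question in time polynomial in~$|\V'|$, hence in time $2^{\mathrm{poly}(N)}$ in the original input. Thus \afmp\ reachability lies in NEXP, and the claimed EXPSPACE bound then follows from the standard inclusion $\mathrm{NEXP}\subseteq \mathrm{EXPSPACE}$ obtained by deterministically enumerating all nondeterministic branches within the exponential space budget.

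The main point requiring care is neither the NP upper bound for $\Z$-VASS reachability nor the NEXP-to-EXPSPACE simulation, but rather the claim that $\V'$ really is singly exponential in size: this is exactly where \autoref{cor-card-bound} supplants the non-elementary Mandel--Simon bound that~\cite{BHM18} would otherwise invoke. A minor additional check is that elements of $\mon{\M}$ admit succinct names, so that the states of~$\V'$ can be represented and enumerated within exponential space; this is immediate from \autoref{thm-main-intro}, since every $M\in \mon{\M}$ arises as a product of at most $\ourbound$ matrices from~$\M$, yielding a name of polynomial bit-length per factor and hence of exponential bit-length overall. I do not expect any serious obstacle beyond these bookkeeping verifications.
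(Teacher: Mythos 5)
Your plan rests on the claim that the $\Z$-VASS $\V'$ produced by the reduction of~\cite{BHM18} has binary description size $2^{\mathrm{poly}(N)}$, and this is where the argument breaks. The state space of~$\V'$ is (essentially) $Q\times\mon{\M}$, so its size is governed by $|\mon{\M}|$, and \autoref{cor-card-bound} gives $|\mon{\M}|\le m^{2^{O(d^2\log d)}}$. Taking logarithms, $\log|\mon{\M}|\le 2^{O(d^2\log d)}\cdot\log m$, which is already singly exponential in the input; hence $|\mon{\M}|$ — and therefore the number of states of~$\V'$, and its total description length — is \emph{doubly} exponential, not singly exponential. (Your own bookkeeping paragraph concedes that each name has exponential bit-length; multiplying by the number of states only makes things worse.) Running an NP algorithm on a doubly exponential instance yields $2$-$\mathrm{NEXP}$, which is not contained in $\mathrm{EXPSPACE}$, so the final step of your argument does not go through.

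The paper sidesteps this by \emph{not} materialising $\V'$. It observes, from the proof of Theorem~7 in~\cite{BHM18}, that reachability in $\V$ can be decided in nondeterministic space polynomial in the encoding of~$\V$ and \emph{poly-logarithmic} in a quantity $||\M||$ that measures $|\mon{\M}|$ together with the bit-size of its largest element; both \autoref{thm-main-intro} (to bound entry magnitudes via the length bound) and \autoref{cor-card-bound} (to bound cardinality) are used here. Since $\log||\M||$ is only singly exponential, one obtains a nondeterministic singly-exponential space bound, and $\mathrm{NEXPSPACE}=\mathrm{EXPSPACE}$ by Savitch. The point you are missing is precisely this space-efficient, on-the-fly simulation in lieu of an explicit construction of~$\V'$; without it the doubly exponential blow-up is fatal to the claimed $\mathrm{EXPSPACE}$ bound.
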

\begin{proof}
  Let $\V=(d,Q,T)$ be an \afmp\ and let $\M$ be defined as above. Set
  $||\M|| := |\mon \M| \cdot d^2 \cdot \max \{ \log(||A|| +1) : A \in
  \mon\M \}$, where $||A||$ is the largest absolute value of all
  entries of $A$. Since $||A_1\cdots A_n|| \le
  d^n\cdot||A_1||\cdots||A_n||$ for all $A_1,\ldots,A_n \in
  \Z^{d\times d}$ and $n\in \N$, by \autoref{thm-main-intro} and
  \autoref{cor-card-bound} we have
  \[
  ||\M|| \le |T|^{2^{O(d^2\cdot \log d)}} \cdot d^2 \cdot
  {2^{d(2d+3)} g(d)^{d+1}} \cdot (\log d + ||T||) \le
  ||T||^{2^{O(d^2\cdot \log d)}},
  \]
  where $||T|| := \sum_{(q,A,\vec b,r)\in T} d^2 \cdot \lceil \log
  ||A|| + \log ||\vec b|| + 1\rceil$. It can be deduced from the proof
  of~\cite[Thm.~7]{BHM18} that reachability in $\V$ can be decided in
  non-deterministic space that is polynomially bounded in the encoding
  of $\V$ and poly-logarithmically in $||\M||$, from which the desired
  exponential space upper bound follows.
\end{proof}

\section{Conclusion}
The main result of this paper has been to show that any element in the
finite multiplicative semigroup $\mon \M$ generated by a finite set
$\M$ of $m$ rational $n\times n$ matrices can be obtained as a product
of generators of length at most $2^{O(n^2 \log n)}$. This length bound
immediately gives that $|\mon \M|$ is bounded by $m^{2^{O(n^2 \log
    n)}}$.

There remain two immediate questions that we did not answer in this
article. The first is whether the order of growth of $|\mon \M|$ we
obtained is tight. If $\mon \M$ is a group its order can be bounded by
$2^nn!$ for almost all $n$, and this bound is attained by the group of
signed permutation matrices. In contrast, in the semigroup case $|\mon
\M|$ also depends on $m$. We conjecture that our doubly exponential
upper bound is not optimal and that it is possible to establish an
exponential upper bound of $|\mon \M|$ in terms of $m$ and $n$. The
second open question concerns the precise complexity of deciding
finiteness of matrix semigroups. We have been unable to establish any
non-trivial lower bounds on this problem and conjecture that our
$\textup{coNEXP}^{\textup{NP}}$ upper bound can significantly be
improved, possibly by adapting techniques of Babai et
al.~\cite{BabaiBealsRockmore93}.

\bibliographystyle{plain}
\bibliography{lit}

\newpage

\appendix

\section{Rational vs.\ Integer Matrix Groups}

We show that any finite subgroup of~$\GL(n,\Q)$ is conjugate to a finite subgroup of~$\GL(n,\Z)$.
This implies that rational matrix groups cannot be larger than integer matrix groups.

First we need a basic fact about finitely generated Abelian groups.
Let $A$ be an (additively written) Abelian (= commutative) group.
The group~$A$ is isomorphic to~$\Z^n$ (``\emph{free of rank~$n$}'') if and only if there is a set $\{a_1, \ldots, a_n\} \subseteq A$, called \emph{basis} of~$A$, such that for each $g \in A$ there are unique coefficients $k_1, \ldots, k_n \in \Z$ such that $g = k_1 a_1 + \cdots + k_n a_n$.
For any $g_1, \ldots, g_m \in A$ we write
\[
 \spa{g_1, \ldots, g_m} \ := \ \left\{ k_1 g_1 + \cdots + k_m g_m : k_1, \ldots, k_m \in \Z \right\}\,.
\]
Using elementary arguments, we prove the following proposition, which is related to the Fundamental Theorem of Finitely Generated Abelian Groups.
\begin{lemma} \label{lem-subgroups-of-Zn}
Let $A$ be isomorphic to~$\Z^n$, and let $H$ be a subgroup of~$A$.
Then $H$ is isomorphic to~$\Z^m$ for some $m \le n$.
\end{lemma}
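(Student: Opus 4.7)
The plan is to prove this by induction on $n$, using the standard projection/splitting argument for subgroups of free abelian groups. The base case $n=0$ is trivial (only $H=\{0\}\cong\Z^0$), and one may handle $n=1$ separately or fold it into the inductive scheme: every subgroup of $\Z$ is $\{0\}$ or $k\Z$ for some minimal $k>0$, hence free of rank $0$ or $1$.

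For the inductive step, fix a basis $a_1,\ldots,a_n$ of $A$ and consider the ``last coordinate'' homomorphism $\pi : A \to \Z$ given by $\pi(k_1 a_1 + \cdots + k_n a_n) = k_n$. Let $H_0 := H \cap \ker\pi$. Since $\ker\pi = \spa{a_1,\ldots,a_{n-1}}$ is isomorphic to $\Z^{n-1}$, the induction hypothesis gives that $H_0$ is isomorphic to $\Z^{m'}$ for some $m' \le n-1$; let $b_1,\ldots,b_{m'}$ be a basis. Now $\pi(H)$ is a subgroup of $\Z$, so either $\pi(H) = \{0\}$, in which case $H = H_0$ and we are done, or $\pi(H) = k\Z$ for some minimal $k > 0$.

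In the latter case, pick any $h \in H$ with $\pi(h) = k$. The key claim is that $\{h, b_1, \ldots, b_{m'}\}$ is a basis of $H$, which would give $H \cong \Z^{m'+1}$ with $m'+1 \le n$. For existence of a representation, take any $g \in H$: then $\pi(g) = ck$ for some $c \in \Z$, so $g - ch \in H \cap \ker\pi = H_0$ and therefore $g - ch = \sum_i \lambda_i b_i$ for suitable integers $\lambda_i$. For uniqueness, apply $\pi$ to any equation $c h + \sum_i \lambda_i b_i = 0$ to force $ck = 0$ hence $c = 0$; then uniqueness of the $\lambda_i$ follows from the fact that $b_1,\ldots,b_{m'}$ form a basis of $H_0$.

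There is no real obstacle here — the argument is standard and essentially self-contained. The only mild care needed is to ensure the coefficients in the final representation are genuinely integers (this is why we chose $h$ mapping to the \emph{generator} $k$ of $\pi(H)$: divisibility $\pi(g) \in k\Z$ is precisely what makes $c = \pi(g)/k$ an integer). The resulting rank bound $m = m' + 1 \le n$ (or $m = m' \le n$ in the degenerate case) closes the induction.
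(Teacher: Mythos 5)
Your proof is correct and takes essentially the same route as the paper: induction on $n$, projecting onto one coordinate, splitting $H$ as a cyclic piece (generated by a preimage of the least positive value in the image) plus the intersection with the kernel, then invoking the inductive hypothesis on that kernel. The only cosmetic differences are that you project onto the last coordinate rather than the first, and you cite the classification of subgroups of $\Z$ where the paper redoes the division-algorithm argument inline.
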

\begin{proof}
We proceed by induction on~$n$, the rank of~$A$.
For the base case, if $n=0$, then $H = A = \{0\}$.
For the induction step, suppose the theorem holds for ranks less than~$n$.
Fix a basis $\{a_1,\dots,a_n\}$ of~$A$ and define
$\phi : H \rightarrow \Z$ by $\phi(k_1 a_1 + \dots + k_n a_n) := k_1$.

If $\phi(H) = \{0\}$ then $H \subseteq \spa{a_2, \dots, a_n}$, so $H$~is isomorphic to $\Z^m$ for some $m \le n-1$ by the induction hypothesis.
So assume $\phi(H) \ne \{0\}$.
Let $m \in \N\setminus\{0\}$ be the smallest positive value in~$\phi(H)$ and let $h_1 \in H$ be such that $\phi(h_1) = m$.
Note that $h_1 \notin \ker(\phi)$.

Consider any $h \in H$.
Let $\ell, j, k_2, \ldots, k_n \in \Z$ with $j \in \{0, \ldots, m-1\}$ such that $h = (\ell m + j) a_1 + k_2 a_2 + \dots + k_n a_n$.
Then $\phi(h - \ell h_1) = j < m$.
By the definition of~$m$ it follows that $j = 0$, and thus $h = \ell h_1 + h_2$ with $h_2 \in \ker(\phi)$.

Let $H_1 := \spa{h_1}$ and $H_2 := \ker(\phi)$.
They are subgroups of~$H$.
By the argument in the previous paragraph, we have $H = H_1 + H_2$.
Further $H_1 \cap H_2 = \{0\}$, as $h_1 \notin \ker(\phi)$.
Since $H_2$ is a subgroup of $\spa{a_2, \ldots, a_n}$, by the induction hypothesis, $H_2$ is isomorphic to $\Z^{m'}$ for some $m' \le n-1$.
It follows that $H$ is isomorphic to $H_1 \times \Z^{m'}$, and so to $\Z^{1+m'}$.
\end{proof}

For any group $G \subseteq \GL(n,\Q)$ and any $C \in \GL(n,\Q)$, the set $C G C^{-1}$ is a group that is conjugate, hence isomorphic, to~$G$.
Following~\cite{KuzmanovichPavlichenkov02} we show that any finite rational matrix group is conjugate to an integer matrix group.
\begin{proposition}
Let $G \subseteq \GL(n,\Q)$ be finite.
Then there is $C \in \GL(n,\Q)$ such that $C G C^{-1} \subseteq \GL(n,\Z)$.
\end{proposition}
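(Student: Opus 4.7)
The plan is to construct $C$ as a change-of-basis matrix for a $G$-invariant lattice inside $\Q^n$. Specifically, I would define
\[
 L \ := \ \Big\{ \sum_{j=1}^k z_j \, e_{i_j} g_j : k \in \N,\ z_j \in \Z,\ i_j \in \{1,\ldots,n\},\ g_j \in G \Big\} \ \subseteq \ \Q^n,
\]
i.e., the $\Z$-span of the orbit of the standard basis under $G$ (acting on row vectors on the right). Since $G$ is closed under composition, $L$ is $G$-invariant: $L\, g \subseteq L$ for every $g \in G$ (in fact with equality, as $G$ is a group). Also, since $I_n \in G$, we have $e_1,\ldots,e_n \in L$.

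The first substantive step is to argue that $L$ is free Abelian of rank exactly $n$. Since $G$ is finite, $L$ is generated by the finitely many row vectors $\{e_i g : 1 \le i \le n,\ g \in G\}$. Let $N$ be a common denominator of all entries of all these generators; then $N L$ is a finitely generated subgroup of~$\Z^n$, hence by \autoref{lem-subgroups-of-Zn} isomorphic to~$\Z^m$ for some $m \le n$. Multiplying by~$1/N$ gives $L \cong \Z^m$. On the other hand $e_1,\ldots,e_n \in L$ are $\Q$-linearly independent, so $L$ has $\Q$-rank at least~$n$, forcing $m = n$. Fix a $\Z$-basis $c_1,\ldots,c_n$ of $L$; these are also $\Q$-linearly independent, hence a $\Q$-basis of~$\Q^n$.

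Now let $C \in \GL(n,\Q)$ be the matrix whose rows are $c_1,\ldots,c_n$. For every $g \in G$ and every $i$, the vector $c_i g$ lies in~$L$, so there exist unique integers $a_{ij}(g) \in \Z$ with $c_i g = \sum_{j=1}^n a_{ij}(g)\, c_j$. Writing $A_g := (a_{ij}(g))_{i,j} \in \Z^{n \times n}$, this reads $C g = A_g C$, equivalently $A_g = C g C^{-1}$. Applying the same argument to $g^{-1} \in G$ yields $A_{g^{-1}} \in \Z^{n \times n}$, and $A_g A_{g^{-1}} = C g g^{-1} C^{-1} = I_n$ shows $A_g \in \GL(n,\Z)$. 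Hence $C G C^{-1} \subseteq \GL(n,\Z)$, as required.

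The only place where care is needed is the free-of-rank-$n$ claim, which is why \autoref{lem-subgroups-of-Zn} was established first; everything else is a mechanical unfolding of the definitions. The key conceptual point being exploited is that the natural $\Z$-lattice spanned by the $G$-orbit of the standard basis is $G$-invariant precisely because $G$ is closed under multiplication, and choosing any $\Z$-basis of this lattice furnishes the desired conjugating matrix.
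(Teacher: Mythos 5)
Your proof is correct and follows essentially the same route as the paper: both construct the $G$-invariant lattice $L = \sum_{g \in G} \Z^n g$, clear denominators to invoke \autoref{lem-subgroups-of-Zn}, conclude $L \cong \Z^n$, and take $C$ to be the change-of-basis matrix from the standard basis to a $\Z$-basis of $L$. The only cosmetic differences are that you establish rank $n$ via $\Q$-linear independence of $e_1,\ldots,e_n$ rather than via the inclusion $\Z^n \subseteq L$, and you make explicit the (easy) check that $CgC^{-1}$ is invertible over $\Z$ by also conjugating $g^{-1}$, which the paper leaves implicit.
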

\begin{proof}
Observe that for any $M \in G$, we have $G M = G$.
Define $A := \sum_{M \in G} \Z^n M \subseteq \Q^n$. 
By the observation, $A M = A$ holds for all $M \in G$.

The set~$A$ forms a group with respect to vector addition.
It is finitely generated by the rows of the matrices in~$G$, i.e., $A = \spa{e_i M : M \in G,\ i \in \{1, \ldots, n\}}$, where $\{e_1, \ldots, e_n\} \subseteq \{0,1\}^n$ is the standard basis.
Let $d \in \N$ be a common denominator of all entries of all those generators.
Then $d A \subseteq \Z^n$, so by \autoref{lem-subgroups-of-Zn} the group $A$ is isomorphic to $\Z^{n'}$ for some $n' \le n$.
On the other hand, since the identity matrix is in~$G$, we have $\Z^n \subseteq A$.
Hence, $n \le n'$, and so $A$~is isomorphic to~$\Z^n$.
Let $\gamma : \Z^n \to A$ be such an isomorphism.
Then there is a matrix $C \in \GL(n,\Q)$ such that $v C = \gamma(v)$ holds for all $v \in \Z^n$.

Let $M \in G$.
Since $\Z^n C M C^{-1} = A M C^{-1} = A C^{-1} \subseteq \Z^n$, all entries of $C M C^{-1}$ are integers.
\end{proof}

\end{document}